\author{F\'elix del Teso M\'endez}
\newtheorem{teor}{Theorem}[section]
\newtheorem{nota}{Note}
\newtheorem{coro}[teor]{Corollary}
\newcommand\RR{\mathbb{R}^N}
\newcommand\rr{\mathbb{R}}
\newcommand\laph{(-\Delta)^{1/2}}
\newcommand\lap{(-\Delta)^{\sigma/2}}
\newcommand\lra{\longrightarrow}
\numberwithin{equation}{section}
\begin{document}

\begin{center}
{\LARGE\textbf{Finite difference method for a \\[6pt] fractional porous medium equation}}

\vspace{.5cm}

\Large{ by \ F\'elix del Teso \\[6pt]
\em Universidad Aut\'onoma de Madrid}
\vspace{1cm}

\date{}
\end{center}\begin{abstract}
We formulate a numerical method to solve the porous medium type equation with fractional diffusion 
\[\displaystyle\frac{\partial u}{\partial t}+(-\Delta)^{1/2} (u^m)=0.\]
The problem is posed in $x\in\mathbb{R}^N$, $m\geq 1$ and with nonnegative initial data. The fractional Laplacian is implemented via the so-called Caffarelli-Silvestre extension. We prove existence and uniqueness of the solution of this method and also the convergence to the theoretical solution of the equation. We run numerical experiments on typical initial data as well as a section that summarizes and concludes the proposed  method.
\end{abstract}

\noindent {\sc Authors' address: }  Departamento de Matem\'aticas, Universidad Aut\'onoma de Madrid,\\
Campus de Cantoblanco, 28049 Madrid, Spain.

{E-mail address: }\textsf{felix.delteso@uam.es}.

{Telephone number:} +34 91 497 8430.

\textbf{Keywords: }Nonlinear diffusion equation, fractional Laplacian, numerical method, finite difference, rate of convergence.

\section{Introduction}
This paper is concerned with a numerical method for the Cauchy problem
\begin{equation}\label{problem}
\left\{
\begin{array}{ll}
\displaystyle\frac{\partial u}{\partial t}+\laph ( |u|^{m-1}u)=0,& x\in \RR,\ t>0,\\[3mm]
u(x,0)=f(x), & x \in \RR,
\end{array}
\right.
\end{equation}
for exponents $m\geq1$ and space dimension $N\geq1$. We present a numerical method for this equation. We also prove existence and uniqueness of solution to the method, via a maximum principle. Moreover, convergence to the theoretical solution is also proven. We extend later the results for equations with $\varphi(u)$ instead of  $ |u|^{m-1}u$ (in the following $u^m$), for some monotone  $\varphi$ with good regularity conditions. The general theory of existence, uniqueness and regularity of solutions for the equation (\ref{problem}) has been studied by A. de Pablo, F. Qui\'os, A. Rodr\'iguez and J.L. V\'azquez in \cite{afracpor}. They also study a more general case with $\lap$ for $\sigma \in (0,2)$ in \cite{afracpor2}. Even more, in \cite{afracpor3} they study the case where the diffusion is logarithmic, that is, $\varphi(u)=\log(u+1)$ as de natural limit as $m\to 0$ of $\varphi(u)=u^m$. 

We recall that the nonlocal operator $\laph$ is well defined via Fourier transform for any function $f$ in the Schwartz class as the operator such that
\[\mathcal{F}(\laph u)(\xi)=|\xi|\mathcal{F}(u)(\xi)\]
or via Riesz potential, for a more general class of functions, as
\[\laph f(x)=C_N \mbox{P.V.} \int_{\RR}\frac{f(x)-f(y)}{|x-y|^{N+1}}dy\]
where $C_N=\pi^{-\frac{N+1}{2}}\Gamma(\frac{N+1}{2})$ is a normalization constant. For an equivalence of both formulations see for example \cite{val}.

Previous works in numerical analysis for nonlocal equations of this type are done by S. Cifani, E. R. Jakobsen, and Karlsen in \cite{jakob}, \cite{jakob2}, \cite{jakob3}. In particular they formulate some convergent numerical methods for entropy and viscosity solutions. One of the main differences of the present work is that we don not deal directly with the integral formulation of the fractional laplacian, instead of this, we pass through the Caffarelli-Sylvestre extension (\cite{caff}) with implies solving a problem with only local operators in one more space dimension. 

\section{Local formulation of the non-local problem}
\subsection{The problem in  $\RR$}
Our aim is to find numerical approximations for the solutions of the next porous medium equation with fractional diffusion,
\begin{equation}\label{problem}
\left\{
\begin{array}{ll}
\displaystyle\frac{\partial u}{\partial t}(x,t)+\laph u^m(x,t)=0& x\in \RR,\ t>0,\\[3mm]
u(x,0)=f(x) & x \in \RR,
\end{array}
\right.
\end{equation}
with $m\geq 1$ and the initial function $f\in L^1(\RR) \cap L^\infty(\RR)$  and nonnegative. The general theory for existence, uniqueness and regularity of the solution of problem (\ref{problem}) can be found in \cite{afracpor}. In particular, they state that problem (\ref{problem}) is equivalent to the so-called extension formulation,

\begin{equation}\label{problemext}\displaystyle
\left\{ \begin{array}{ll}\displaystyle
\Delta w(x,y,t)=0,& x \in \RR, \ y>0,\ t>0, \\[3mm]
\displaystyle\frac{\partial w^{1/m}}{\partial t}(x,0,t)=\frac{\partial w}{\partial y}(x,0,t),& x \in \RR,\ y=0,\ t>0, \\[3mm]
w(x,0,0)=f^m(x) , & x\in \RR.
\end{array} \right.
\end{equation}
The equivalence between \eqref{problem} and \eqref{problemext} holds in the sense of trace and harmonic extension operators, that is,
\[ u(x,t)=Tr(w^{1/m}(x,y,t)), \ \ \  w(x,y,t)=E(u^m(x,t)).\]
In (\ref{problemext}), $\Delta$ denotes the $N+1$ dimensional laplacian operator,
\[\Delta=\sum_{l=1}^N\frac{\partial^2}{\partial x_l^2}+\frac{\partial^2}{\partial y^2}.\]

\subsection{The problem in the bounded domain}
In order to construct a numerical solution to Problem (\ref{problemext}), we perform a monotone approximation of the solutions  in the whole space by the solutions of the problem posed a bounded domain.  

We consider $X_l,Y,T\in \rr$ positive for $l=1, . . . , N$. We define the bounded domain $\Omega=(-X_1,X_1)\times . . . \times (-X_N,X_N)\times(0,Y)$, and $\Gamma=\partial \Omega$. For convenience we also divide the boundary in two parts: 
\[\Gamma_d=[-X_1,X_1]\times . . . \times [-X_N,X_N]\times \{0\},\] and $\Gamma_h=\partial \Omega \backslash \Gamma_d $. With these notations, we formulate the problem in the bounded domain as

\begin{equation}\label{problemextbdd}\displaystyle
\left\{ \begin{array}{ll}\displaystyle
\Delta w(x,y,t)=0,& (x,y) \in \Omega ,\ t\in(0,T], \\[3mm]
\displaystyle\frac{\partial w^{1/m}}{\partial t}(x,0,t)=\frac{\partial w}{\partial y}(x,0,t),& (x,y) \in \Gamma_d,\ t\in(0,T], \\[3mm]
w(x,0,0)=f^m(x) , & (x,y)\in \Gamma_d,\\[3mm]
w(x,y,t)=0, & (x,y)\in \Gamma_h,
\end{array} \right.
\end{equation}
where we have imposed homogeneous boundary conditions on $\Gamma_h$. 

In the sequel, we will consider the problem with $N=1$ in order to simplify de notation but all the arguments are also valid for $N>1$ without any extra effort.

\begin{picture}(350,200)

\put(0, 15){\line(1, 0){350 }}

\put(175, 0){\line(0, 1){180 }}

\put(20,15){\dashbox{5}(310,140){}}

\put(20, 15.5){\line(1, 0){310 }}
\put(20, 14.5){\line(1, 0){310 }}

\put(20,155.5){\dashbox{5}(310,0){}}
\put(20,154.5){\dashbox{5}(310,0){}}

\put(19.5,15){\dashbox{5}(0,140){}}
\put(20.5,15){\dashbox{5}(0,140){}}

\put(330.5,15){\dashbox{5}(0,140){}}
\put(329.5,15){\dashbox{5}(0,140){}}

\put(325,0){\shortstack[l]{X}}

\put(10,0){\shortstack[l]{-X}}

\put(125, 80){\shortstack[l]{\huge$\Omega$}}

\put(180,160){\shortstack[l]{Y}}

\put(250, 180){\line(1, 0){40 }}
\put(250, 180.5){\line(1, 0){40 }}
\put(250, 179.5){\line(1, 0){40 }}
\put(295,178){$\Gamma_d$}

\put(250, 170){\dashbox{5}(40,0){}}
\put(250, 169.5){\dashbox{5}(40,0){}}
\put(250, 170.5){\dashbox{5}(40,0){}}
\put(295,166){$\Gamma_h$}

\end{picture}

\section{Discrete formulation}

In order to solve problem (\ref{problemextbdd}) fot $t\in[0,T]$, we first perform a space and time discretization.  For time discretization we choose the number of steps $J$, and then
\[0\leq j \Delta t\leq T, \ \ \ j=0, . . .   ,J \ \ \mbox{where } \Delta t=T/J \mbox{ and } t_j=j\Delta t.\]

We also need to discretize the space domain  $\overline{\Omega}=[-X,X]\times[0,Y]$. Let $I,K$ be the number  of steps on each space direction, 
\[0 \leq i \Delta x  \leq 2X , \ \ \ i=0, . . .   ,I \ \ \mbox{where } \Delta x=2X/I \mbox{ and } x_i= i \Delta x-X, \]
\[0 \leq k \Delta y \leq Y , \ \ \ k=0, . . .   ,K \ \ \mbox{where } \Delta y=Y/K \mbox{ and } y_k= k \Delta y,\]

\begin{picture}(350,190)

\put(15, 15){\line(1, 0){320 }}

\put(175, 0){\line(0, 1){180 }}

\put(20,15){\dashbox{5}(310,140){}}

\put(20,15.5){\dashbox{5}(310,0){}}
\put(20,14.5){\dashbox{5}(310,0){}}

\put(20,155.5){\dashbox{5}(310,0){}}
\put(20,154.5){\dashbox{5}(310,0){}}

\put(19.5,15){\dashbox{5}(0,140){}}
\put(20.5,15){\dashbox{5}(0,140){}}

\put(330.5,15){\dashbox{5}(0,140){}}
\put(329.5,15){\dashbox{5}(0,140){}}

\put(125, 80){\shortstack[l]{\huge$\Omega$}}


\put(20,15){\grid(310,140)(15.5,15.5)}
\put(14,5){\shortstack[l]{$x_0$}}
\put(30,5){\shortstack[l]{$x_1$}}
\put(46,5){\shortstack[l]{$x_2$}}
\put(62,5){\shortstack[l]{$. . . $}}
\put(77,5){\shortstack[l]{$x_i$}}
\put(91,5){\shortstack[l]{$. . . $}}

\put(325,5){\shortstack[l]{$x_I$}}

\put(5, 15){\shortstack[l]{$y_0$}}
\put(5, 29){\shortstack[l]{$y_1$}}
\put(5, 43){\shortstack[l]{$y_2$}}
\put(8, 55){\shortstack[l]{$.$}}
\put(8, 59){\shortstack[l]{$.$}}
\put(8, 63){\shortstack[l]{$.$}}
\put(5, 70){\shortstack[l]{$y_k$}}
\put(8, 84){\shortstack[l]{$.$}}
\put(8, 88){\shortstack[l]{$.$}}
\put(8, 92){\shortstack[l]{$.$}}

\put(5, 155){\shortstack[l]{$y_K$}}
\end{picture}
\\\\
We use the notation 
\begin{equation}\label{notation}
w(x_i,y_k,t_j)=(w_j)_i^k,
\end{equation}
for the value of the solution $w$ to Problem (\ref{problemextbdd}) in the points of the mesh, and
\begin{equation}\label{notation}
w(x_i,y_k,t_j)\approx(W_j)_i^k,
\end{equation}
for the solution of the numerical method.

\subsection{Numerical Method}
In the following, we will assume that $\Delta y=\Delta x$. For each time step $j=1,. . . ,J$ we have to solve the following linear system of equations
\begin{equation}\label{numericmethod}\displaystyle
\left\{ \begin{array}{ll}\displaystyle
\frac{(W_j)_{i+1}^{k}+(W_j)_{i-1}^{k}+(W_j)_{i}^{k+1}+(W_j)_{i}^{k-1}-4(W_j)_{i}^{k}}{\Delta x^2} =0, &  0<i<I,
 0<k<K,\\[3mm]\displaystyle
(W_j)_i^0=\bigg[\frac{\Delta t}{\Delta x} \big({(W_{j-1})}_i^1-{(W_{j-1})}_i^0\big)  +[(W_{j-1})_i^0]^{1/m}\bigg]^m, & \mbox{if } 0<i<I,\\[3mm]\displaystyle
(W_j)_i^k=0,& \mbox{otherwise. }\\
\end{array} \right.
\end{equation}
Note that the second equation  is explicit in the sense that it only depends on the solution where the solution of the numerical method in the previous time step. We use the solution of 
\begin{equation}\nonumber\displaystyle
\left\{ \begin{array}{ll}\displaystyle
\frac{(W_0)_{i+1}^{k}+(W_0)_{i-1}^{k}+(W_0)_{i}^{k+1}+(W_0)_{i}^{k-1}-4(W_0)_{i}^{k}}{\Delta x^2} =0, &  0<i<I,
 0<k<K,\\[3mm]\displaystyle
(W_0)_i^0=f^m(x_i), & \mbox{if } 0<i<I,\\[3mm]\displaystyle
(W_0)_i^k=0,& \mbox{otherwise, }\\
\end{array} \right.
\end{equation}
 to start the numerical method.

\subsection{Local truncation error}
We define the local truncation error $(\tau_j)_i^k$ as the error that comes from plugging the solution $w$ to Problem (\ref{problemextbdd}) into the numerical method (\ref{numericmethod}).
Let us also write
\begin{equation}
\Lambda=\max_{i,k,j}|(\tau_j)_i^k|.
\end{equation}
\begin{teor}\label{localtruncerror}
Let $w$ be the solution to Problem (\ref{problemextbdd}). Assume that there exist two constants $C_1,C_2>0$ such that 
\[C_1\Delta x\leq \Delta t\leq C_2\Delta x. \]
Then,
\begin{equation}
\Lambda=O(\Delta t(\Delta x+\Delta t)).
\end{equation}
\end{teor}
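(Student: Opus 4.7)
The plan is to bound the residual $|(\tau_j)_i^k|$ separately in each of the three regions appearing in \eqref{numericmethod} and then take the maximum. Throughout I would assume the solution $w$ of \eqref{problemextbdd} has bounded fourth-order spatial derivatives in $\overline{\Omega}\times[0,T]$ and that $w^{1/m}$ has bounded second time derivative on $\Gamma_d\times[0,T]$; this regularity is consistent with the existence and regularity theory in \cite{afracpor}.

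\emph{Interior nodes} ($0<i<I$, $0<k<K$). The residual is simply the standard five-point discrete Laplacian of $w$ at $(x_i,y_k,t_j)$. A fourth-order Taylor expansion together with $\Delta w\equiv 0$ yields $(\tau_j)_i^k = O(\Delta x^2)$. Using $C_1\Delta x\le \Delta t$ to rewrite $\Delta x^2 \le (1/C_1)\,\Delta t\,\Delta x$, this already fits into $O(\Delta t(\Delta x+\Delta t))$.

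\emph{Nodes on $\Gamma_d$} ($0<i<I$, $k=0$). Extracting $m$-th roots in the second line of \eqref{numericmethod}, the scheme becomes
\[
[(W_j)_i^0]^{1/m} - [(W_{j-1})_i^0]^{1/m} = \frac{\Delta t}{\Delta x}\bigl((W_{j-1})_i^1 - (W_{j-1})_i^0\bigr).
\]
Let $\tilde\tau$ denote the residual of this linearized scheme under substitution of $w$. Expanding $w^{1/m}$ in $t$ and $w$ in $y$ around $(x_i,0,t_{j-1})$, the $O(\Delta t)$ contributions combine to $\Delta t\,(\partial_t w^{1/m}-\partial_y w)(x_i,0,t_{j-1})$, which vanishes by the exact boundary condition; the remainder is $\tilde\tau = O(\Delta t^2) + O(\Delta t\,\Delta x) = O(\Delta t(\Delta x+\Delta t))$. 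To pass from $\tilde\tau$ to the original residual $(\tau_j)_i^0 = (w_j)_i^0 - [\,\cdots\,]^m$, I would apply the mean value theorem to $s\mapsto s^m$: writing $a=[(w_j)_i^0]^{1/m}$ and $b=a-\tilde\tau$,
\[
(\tau_j)_i^0 = a^m - b^m = m\xi^{m-1}\,\tilde\tau
\]
for some $\xi$ between $a$ and $b$. Boundedness of $w$ bounds $\xi$ and yields $|(\tau_j)_i^0|=O(\Delta t(\Delta x+\Delta t))$.

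\emph{Nodes on $\Gamma_h$.} The scheme sets $(W_j)_i^k=0$ and the exact boundary datum is also $0$, so $(\tau_j)_i^k = 0$. Taking the maximum over the three regions gives $\Lambda = O(\Delta t(\Delta x+\Delta t))$. The main technical obstacle is the lack of smoothness of $s\mapsto s^{1/m}$ at $s=0$ when $m>1$: wherever $w$ vanishes on $\Gamma_d$, both the Taylor expansion of $w^{1/m}$ in $t$ and the MVT step require extra care, since $\xi^{m-1}$ may blow up in the second and $a,b$ may fail to be positive in the first. This is resolved either by an a priori positive lower bound on $w$ on the relevant strip, or by replacing $s^m$ by a smooth $\varphi$ as announced in the introduction.
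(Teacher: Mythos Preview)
Your argument is essentially the paper's own proof: the same three-region split, the same Taylor expansions (five-point Laplacian gives $O(\Delta x^2)$ in the interior; forward differences in $t$ and $y$ on $\Gamma_d$ combine via the exact boundary condition to $O(\Delta t\,\Delta x)+O(\Delta t^2)$; zero on $\Gamma_h$), and the same use of $C_1\Delta x\le\Delta t$ to absorb $\Delta x^2$ into $\Delta t(\Delta x+\Delta t)$. The only difference is cosmetic: the paper \emph{defines} the boundary truncation error directly in the $1/m$-root form, i.e.\ as your $\tilde\tau$, so your extra mean-value step to recover the residual of the equation with the outer $m$-th power is not needed there. Incidentally, that MVT step is harmless but your stated worry is misplaced: for $m\ge1$ the factor $m\xi^{m-1}$ is bounded on $[0,b_{max}^{1/m}]$ and does not blow up; the genuine regularity issue is only the Taylor expansion of $w^{1/m}$ in $t$ where $w$ may vanish, which the paper handles exactly as you do, by assuming sufficient smoothness of the solution.
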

\begin{proof}
Of course the local truncation error in the boundary nodes situated on the part $\Gamma_h$ of the boundary  is zero since we have imposed that the solution is zero in $\Gamma_h$ and equal to $f^m(x)$ in $\Gamma_d$ as in Problem (\ref{problemextbdd}).

If $0<i<I$ and $0<k<K$ (the interior nodes), then
\begin{eqnarray*}
(\tau_{j-1})_i^k&=&\frac{1}{\Delta x^2} \big[(w_{j-1})_{i+1}^{k}+(w_{j-1})_{i-1}^{k}+(w_{j-1})_{i}^{k+1}+(w_{j-1})_{i}^{k-1}-4(w_{j-1})_{i}^{k}\big]\\
&=& \Delta w(x_i,y_k,t_{j-1})+O(\Delta x^2)=O(\Delta t(\Delta x+ \Delta t)).
\end{eqnarray*}

If $0< i< I$ and $k=0$ (the $\Gamma_h$ nodes), then
\begin{eqnarray*}
(\tau_{j-1})_i^0&=&\frac{\Delta t}{\Delta x} \big[{(w_{j-1})}_i^1-{(w_{j-1})}_i^0\big]  +[(w_{j-1})_i^0]^{1/m}-[(w_{j})_i^0]^{1/m}\\
&=&\Delta t\big[ \frac{\partial w}{\partial y}(x_i,0,t_{j-1})+O(\Delta x)\big]- \Delta t \big[\frac{\partial w^{1/m}}{\partial t}(x_i,0,t_{j-1})+O(\Delta t) \big]\\
&=&O(\Delta t \Delta x)+ O(\Delta t^2)=O(\Delta t(\Delta t+\Delta x)).
\end{eqnarray*}
The previous calculus are done assuming that the theoretical solution is smooth enough.
\end{proof}
\subsection{Existence and uniqueness of the numerical solution}

We define the following quantity needed for the maximum principle theorem,

\[ b_{max}= \max_{x}\{f^m(x)\}.\] 
In the sequel we will sometimes denote by $\varphi(x)=x^m$. If $m\geq1$ then $\varphi'(x)=mx^{m-1}$ is a locally bounded function for $x\geq0$.
\begin{teor}\label{maxprin}[Discrete maximum principle]
Let $(W_j)_i^k$ be the solution to  Problem (\ref{numericmethod}) with $m\geq1$. Assume 
\begin{equation}\label{constant}
\Delta t\leq C(m,f) \Delta x, \qquad \mbox{where} \qquad C(m,f)=[m (b_{max})^{(m-1)}]^{-1}.
\end{equation}
Then, for every $i,k,j$ we have
\begin{equation}
0\leq (W_j)_i^k \leq b_{max}.
\end{equation}
\end{teor}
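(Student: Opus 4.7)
My plan is to prove the bound by induction on the time index $j$, splitting the argument by node type. The key observation is that, for each fixed $j$, the interior unknowns $(W_j)_i^k$ with $0<i<I$, $0<k<K$ are precisely the discrete harmonic extension of the values of $(W_j)$ already prescribed on the boundary of the mesh, namely zero on $\Gamma_h$ and, on $\Gamma_d$, the explicit forward update in terms of $(W_{j-1})$. Consequently, once I control the nonlinear $\Gamma_d$ update, the bound at interior nodes follows for free from the standard discrete maximum principle for the five-point Laplacian, which says that discrete harmonic grid functions attain their extrema at boundary nodes.

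The base case $j=0$ is immediate: $(W_0)$ is the discrete harmonic extension of $f^m$ on $\Gamma_d$ and $0$ on $\Gamma_h$, both contained in $[0,b_{max}]$, so the discrete maximum principle yields $0\le (W_0)_i^k\le b_{max}$ everywhere. For the inductive step, assume $0\le (W_{j-1})_i^k\le b_{max}$ on the whole mesh. On $\Gamma_h$ the inequality is trivial since $(W_j)_i^k=0$. For a $\Gamma_d$ node, set $\lambda=\Delta t/\Delta x$, $v_0=(W_{j-1})_i^0$, $v_1=(W_{j-1})_i^1$, and $u_k=v_k^{1/m}$; by the inductive hypothesis $u_k\in[0,b_{max}^{1/m}]$. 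With $\varphi(x)=x^m$, the update reads
\[(W_j)_i^0=B^m,\qquad B=u_0+\lambda\bigl[\varphi(u_1)-\varphi(u_0)\bigr],\]
so it suffices to show $0\le B\le b_{max}^{1/m}$. Applying the mean value theorem to $\varphi$, there exists $\xi$ between $u_0$ and $u_1$ with $\varphi(u_1)-\varphi(u_0)=m\xi^{m-1}(u_1-u_0)$, whence
\[B=\bigl(1-\lambda m\xi^{m-1}\bigr)u_0+\lambda m\xi^{m-1}\,u_1.\]
The CFL-type hypothesis $\Delta t\le C(m,f)\Delta x$ with $C(m,f)=[m(b_{max})^{m-1}]^{-1}$ is exactly the condition that forces $\lambda m\xi^{m-1}\le 1$ in the admissible range of $\xi$, using the uniform bound on $\varphi'$ given by the induction hypothesis. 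Hence $B$ is a convex combination of $u_0,u_1\in[0,b_{max}^{1/m}]$, so $B$ lies there too, and raising to the $m$-th power gives $0\le (W_j)_i^0\le b_{max}$. Invoking the discrete harmonicity observation of the first paragraph now propagates the bound to every interior node at time $j$, closing the induction.

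The main obstacle is the nonlinear $\Gamma_d$ update: a direct estimate loses control because of the outer $m$-th power and the inner $v_0^{1/m}$. The trick is the change of variable $u=v^{1/m}$, which turns the boundary step into a forward Euler step $u_j=u_{j-1}+\lambda(\varphi(u_1)-\varphi(u_0))$ for the trace $u=w^{1/m}$; a single application of the mean value theorem then rewrites this Euler step as a convex combination whose stability is precisely the condition on $\Delta t/\Delta x$ combined with the local boundedness of $\varphi'$ on the range dictated by the induction. Everything else in the argument is either routine (the trivial $\Gamma_h$ case) or a direct appeal to the discrete maximum principle for the five-point Laplacian.
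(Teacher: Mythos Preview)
Your proof is correct and follows essentially the same approach as the paper: induction on the time step, reduction to boundary nodes via the discrete maximum principle for the five-point Laplacian, the change of variable $U=W^{1/m}$ on $\Gamma_d$, and the mean value theorem to recast the explicit update as a convex combination under the CFL condition. The paper presents the same argument with the same key steps, only with slightly different notation.
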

\begin{nota}
It is interesting to remark that $C(1,f) =1$, which means that we recover the expected restriction $\Delta t\leq \Delta x$ for the linear case. In the literature, this kind of condition use to be called \textbf{CFL} condition.
\end{nota}
\begin{proof}
On each time step we have a discrete harmonic extension problem and so, it is sufficient to prove the maximum principle in the boundary nodes and therefore the interior nodes are automatically smaller than them.
We will do the proof by induction on each time step. It is trivial that
\[0\leq (W_0)_i^k\leq b_{max}.\]
Now we assume that
\[0\leq (W_{j-1})_i^k\leq b_{max}.\]
Then, 
\[[(W_j)_i^0]^{1/m}=\frac{\Delta t}{\Delta x} \big({(W_{j-1})}_i^1-{(W_{j-1})}_i^0\big)  +[(W_{j-1})_i^0]^{1/m},\]
and changing variables to $\displaystyle(U_j)_i^k= [(W_j)_i^k]^{1/m} $, we obtain that
\begin{equation}\label{recu}
(U_j)_i^0=\frac{\Delta t}{\Delta x} \big([(U_{j-1})_i^1]^m-[(U_{j-1})_i^0]^m\big)  +(U_{j-1})_i^0.
\end{equation}
Now, using the Mean Value Theorem
\[[(U_{j-1})_i^1]^m-[(U_{j-1})_i^0]^m=[(U_{j-1})_i^1-(U_{j-1})_i^0]\varphi'(\xi),\]
for some $\xi \in [(U_{j-1})_i^1,(U_{j-1})_i^0]$. Then we can rewrite (\ref{recu}) as
\begin{equation}\label{recu2}
(U_j)_i^0=\varphi'(\xi)\frac{\Delta t}{\Delta x} (U_{j-1})_i^1 +\bigg[1-\varphi'(\xi)\frac{\Delta t}{\Delta x}\bigg](U_{j-1})_i^0.
\end{equation}
At this point, thanks to our induction hypothesis and the value of the constant (\ref{constant}), it follows that $\displaystyle \varphi'(\xi)\frac{\Delta t}{\Delta x}=m\xi^{m-1}\frac{\Delta t}{\Delta x}\leq 1$ and therefore
\begin{eqnarray*}
|(U_j)_i^0|&\leq&\varphi'(\xi)\frac{\Delta t}{\Delta x} |(U_{j-1})_i^1| +\bigg[1-\varphi'(\xi)\frac{\Delta t}{\Delta x}\bigg]|(U_{j-1})_i^0|\\
&\leq&\varphi'(\xi)\frac{\Delta t}{\Delta x} (b_{max})^{1/m} +\bigg[1-\varphi'(\xi)\frac{\Delta t}{\Delta x}\bigg] (b_{max})^{1/m} \\
&=&(b_{max})^{1/m}.
\end{eqnarray*}
The same argument holds for $ (U_j)_i^0\geq 0$.
\end{proof}

\begin{coro}
If $\Delta t\leq C(m,f) \Delta x$, then Problem (\ref{numericmethod}) has a unique solution.
\end{coro}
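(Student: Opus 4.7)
The plan is to argue by induction on the time index $j$, exploiting the fact that the numerical method (\ref{numericmethod}) decouples at each step into two subproblems: first, an \emph{explicit} update that determines the boundary values $(W_j)_i^0$ on $\Gamma_d$ from the solution at step $j-1$, together with the prescribed zero values on $\Gamma_h$; second, a linear discrete Dirichlet problem for the five-point Laplacian on the interior nodes with data supplied by the first step. So the entire question reduces to showing that (a) the data produced by the explicit update is well-defined, and (b) the resulting linear system for the interior values has a unique solution.

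For the base case $j=0$, the boundary values are simply $f^m(x_i)$ on $\Gamma_d$ and $0$ on $\Gamma_h$, so only the interior problem requires attention. For the inductive step, assume $(W_{j-1})_i^k$ is uniquely determined. Invoking Theorem \ref{maxprin}, the CFL hypothesis ensures $0\leq (W_{j-1})_i^k\leq b_{max}$, so the fractional power $[(W_{j-1})_i^0]^{1/m}$ is a well-defined real number; moreover, the convex-combination identity (\ref{recu2}) used in the proof of that theorem shows that the bracketed quantity in the update formula is nonnegative, so its $m$-th power is unambiguously a nonnegative real. This uniquely determines the $\Gamma_d$ boundary data at step $j$.

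It remains to address the interior linear system. I would argue nonsingularity by the standard discrete maximum principle for the five-point stencil: if $V$ satisfies the homogeneous interior equations with zero boundary data, the mean-value property $4V_i^k=V_{i+1}^k+V_{i-1}^k+V_i^{k+1}+V_i^{k-1}$ forces interior extrema to occur on the boundary, so $V\equiv 0$. Consequently the system has a unique solution for every choice of boundary data, completing the induction.

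The only genuinely subtle point—and the reason Theorem \ref{maxprin} is invoked—is the well-definedness of the nonlinear update on $\Gamma_d$ for noninteger $m$. Without the CFL condition one could in principle produce a negative value in the bracket, so that its $m$-th power ceases to be real-valued and the recursion breaks down. The linear-algebra component (uniqueness for the discrete harmonic Dirichlet problem) is standard and constitutes no obstacle.
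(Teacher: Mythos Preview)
Your argument is correct and follows essentially the same skeleton as the paper: induction on $j$, reducing each step to the uniqueness of the discrete Dirichlet problem for the five-point Laplacian, which is settled by the discrete maximum principle. The paper's version is terser---it takes the difference $Y=W-V$ of two solutions, observes that at $j=0$ this difference satisfies the scheme with $f\equiv 0$, applies Theorem~\ref{maxprin} with $b_{\max}=0$ to get $Y_0\equiv 0$, and then inducts (once the solutions agree at $j-1$, the explicit boundary update and the Dirichlet problem force agreement at $j$). Your decomposition into ``explicit boundary update $+$ interior Dirichlet problem'' is exactly the mechanism behind that induction.

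Where you add something the paper glosses over is the remark on well-definedness of the nonlinear update for noninteger $m$: you correctly point out that the CFL condition, via the convex-combination identity used in the proof of Theorem~\ref{maxprin}, guarantees the bracketed quantity is nonnegative, so its $m$-th power is a genuine real number. The paper's proof simply asserts ``since we are working with a linear system of equations, the existence is equivalent to the uniqueness'' and does not flag this issue. Your observation explains why the CFL hypothesis is actually needed in the statement, not just for the linear-algebraic part (where it plays no role).
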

\begin{proof}
We are only going to proof uniqueness, and since we are working with a linear system of equations, the existence is equivalent to the uniqueness.

Let $(W_j)_i^k$ and $(V_j)_i^k$ be two solutions of (\ref{numericmethod}) and let us define 
\[(Y_j)_i^k=(W_j)_i^k-(V_j)_i^k.\]
Then $(Y_0)_i^k$ satisfies  (\ref{numericmethod})  with $f\equiv 0$ and so, by the discrete maximum principle,
\[0\leq (Y_0)_i^k\leq0,\]
for all $i,k$. Proceeding by induction we get that for all $i,j,k$ we have
\[ (Y_j)_i^k=0. \]
\end{proof}

\subsection{Convergence of the numerical solution}
Since we are originally interested in the solution at the boundary $(U_j)_i^k=[(W_j)_i^k]^{1/m}$ we have two options in order to define the define de error of the numerical method. The first option,
\begin{equation}\label{error2}
(f_j)_i^k=w(x_i,y_k,t_j)-(W_j)_i^k, \ \ \ F_j=\max_{i,k}|(f_j)_i^k|,
\end{equation}
and the second one
\begin{equation}\label{error1}
(e_j)_i^k=u(x_i,y_k,t_j)-(U_j)_i^k, \ \ \ E_j=\max_{i,k}|(e_j)_i^k|.
\end{equation}
Anyway, if we are able to control (\ref{error1}) we have also a control of (\ref{error2}) because $\varphi'(x)$ is locally bounded and so
\begin{eqnarray*}
(f_j)_i^k&=&(w_j)_i^k-(W_j)_i^k=[(u_j)_i^j]^m-[(U_j)_i^k]^m\\
&=&\big[(u_j)_i^j-(U_j)_i^k\big]\varphi'(\xi)\\
&=& (e_j)_i^k\varphi'(\xi),
\end{eqnarray*}
for some $\xi \in [(u_j)_i^j,(U_j)_i^k]$. This implies that $|(f_j)_i^k|\leq C(m,f)|(e_j)_i^k|$ and therefore $F_j\leq C(m,f) E_j$.
\begin{teor}\label{convergence}
Let $w$ be the solution to Problem (\ref{problemextbdd}) and $(W_j)_i^k$ be the solution to system (\ref{numericmethod}) with $m\geq 1$. Assume that there exists two constants $C(m,f), D>0$ such that 
\[D\Delta x\leq \Delta t\leq C(m,f)\Delta x. \]
Then
\[F_j=O(\Delta x+\Delta t),\]
for $j=1,. . .,J$.
\end{teor}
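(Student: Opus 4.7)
The plan is to mirror the discrete maximum-principle argument of Theorem~\ref{maxprin}: for each time step, decouple the interior from the bottom row via the discrete harmonic structure, derive a stability estimate for the update on $\Gamma_d$ by a convex-combination argument, and iterate in $j$.

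For fixed $j$, the error $(f_j)_i^k$ satisfies the discrete Poisson equation $\Delta_h(f_j)_i^k=(\tau_j)_i^k$ at interior nodes, vanishes on $\Gamma_h$, and takes the values $(f_j)_i^0$ on $\Gamma_d$. A standard comparison with a quadratic barrier gives an estimate of the form
\[
F_j \leq \max_i |(f_j)_i^0| + C_0(X,Y)\,\Lambda,
\]
reducing the problem to controlling the error on the bottom row. The Mean Value Theorem for $\varphi(s)=s^m$, applied at any boundary node, gives $|(f_j)_i^0|\leq m\,b_{max}^{(m-1)/m}|(e_j)_i^0|$, so it is enough to estimate $G_j:=\max_i|(e_j)_i^0|$.

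For the recursion I would subtract the scheme from the identity provided by Theorem~\ref{localtruncerror} at the $\Gamma_d$ nodes, pass to $u=w^{1/m}$, $U=W^{1/m}$, and apply the MVT to the two differences of $m$th powers just as in the proof of Theorem~\ref{maxprin}, producing
\[
(e_j)_i^0 = m\eta^{m-1}\frac{\Delta t}{\Delta x}(e_{j-1})_i^1 + \Big[1 - m\xi^{m-1}\frac{\Delta t}{\Delta x}\Big](e_{j-1})_i^0 - (\tau_{j-1})_i^0,
\]
with $\eta,\xi\in[0,b_{max}^{1/m}]$ and both multipliers in $[0,1]$ by the CFL $\Delta t \leq C(m,f)\Delta x$. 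Using the identity $m\eta^{m-1}(e_{j-1})_i^1=(f_{j-1})_i^1$ to rewrite the interior term as $\frac{\Delta t}{\Delta x}(f_{j-1})_i^1$, and then applying the first step at time $j-1$ to bound $|(f_{j-1})_i^1|$ in terms of $G_{j-1}$ and $\Lambda$, I expect to obtain a recursion of the form
\[
G_j \leq (1+K\Delta t)\,G_{j-1}+K'\Lambda,
\]
where $K,K'$ depend on $m$, $b_{max}$, $X$, $Y$ and the regularity of $u$. A discrete Grönwall argument, together with $G_0=0$, $J=T/\Delta t$, and $\Lambda=O(\Delta t(\Delta x+\Delta t))$ from Theorem~\ref{localtruncerror}, would then give $G_J=O(\Delta x+\Delta t)$ and, through the first step and the MVT bound, $F_J=O(\Delta x+\Delta t)$.

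The main obstacle is the closure step when $m>1$: the MVT points $\eta$ (at $k=1$) and $\xi$ (at $k=0$) no longer coincide, so the bottom-row identity is not a pure convex combination. Making the excess $m(\eta^{m-1}-\xi^{m-1})\frac{\Delta t}{\Delta x}$ of size $O(\Delta t)$ rather than $O(1)$ is what keeps the Grönwall amplification at $1+O(\Delta t)$ across the $T/\Delta t$ iterations, and this is exactly where the smoothness of the exact trace $u$ that was already invoked in the truncation-error analysis has to be used.
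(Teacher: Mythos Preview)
Your outline is essentially the paper's argument: subtract the scheme from the truncation identity at the $\Gamma_d$ nodes, apply the Mean Value Theorem to obtain the recursion with coefficients $\varphi'(\xi_1)\frac{\Delta t}{\Delta x}$ and $1-\varphi'(\xi_0)\frac{\Delta t}{\Delta x}$, show $|\varphi'(\xi_1)-\varphi'(\xi_0)|=O(\Delta x)$ via smoothness, and conclude by discrete Gr\"onwall. Your final paragraph captures exactly the key step; the paper carries it out by observing that $g(a,b)=(a^m-b^m)/(a-b)$ is $C^1$, that $\varphi'(\xi_1)=g\big((u_{j-1})_i^1,(U_{j-1})_i^1\big)$ and $\varphi'(\xi_0)=g\big((u_{j-1})_i^0,(U_{j-1})_i^0\big)$, and that the arguments differ by $O(\Delta x)$.

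One correction: drop the $f$-conversion step. Rewriting $m\eta^{m-1}(e_{j-1})_i^1$ as $(f_{j-1})_i^1$ and then invoking the harmonic barrier replaces $\varphi'(\eta)$ by the global constant $C=m\,b_{max}^{(m-1)/m}$ (through $\max_{i'}|(f_{j-1})_{i'}^0|\le C\,G_{j-1}$), and $C-\varphi'(\xi)$ is \emph{not} $O(\Delta x)$---for $\xi$ near $0$ the resulting coefficient of $G_{j-1}$ is close to $2$, which blows up after $J\sim 1/\Delta t$ iterations. The point is that the detour through $f$ destroys the spatial locality: after the harmonic maximum you are comparing $\varphi'$ at two different $i$-indices, where smoothness in $y$ is useless. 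The paper instead works with the global error $E_{j-1}=\max_{i,k}|(e_{j-1})_i^k|$, bounds $|(e_{j-1})_i^1|\le E_{j-1}$ directly (keeping $\xi_1$ and $\xi_0$ tied to the \emph{same} $i$), and closes the loop via $E_j\le (E_B)_j+O(\Delta x^2)$ from the discrete harmonic structure. With that adjustment your last paragraph is exactly what is needed, and the rest of your plan goes through.
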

\begin{proof}
As in the local truncation error, the election for the boundary conditions in $\Gamma_h$ in the numerical method give us error zero there. 

Lets us denote $(E_B)_j$ and $(E_I)_j$ the maximum errors in the boundary nodes and in the interior nodes at time $j\Delta t$, that is

\[(E_B)_j=\max_{0\leq i\leq I}|(e_j)_i^0|.\]
Since we have chosen a second order approximation for the laplacian, it is well known that

\[E_j=\max\{(E_B)_j, O(\Delta x^2)\}\leq (E_B)_j+O(\Delta x^2).\]

If $0\leq i\leq I$, we have the equations
\[(\tau_{j-1})_i^0=\frac{\Delta t}{\Delta x} \big[{(w_{j-1})}_i^1-{(w_{j-1})}_i^0\big]  +[(w_{j-1})_i^0]^{1/m}-[(w_{j})_i^0]^{1/m},\]

\[[(W_j)_i^0]^{1/m}=\frac{\Delta t}{\Delta x} \big[{(W_{j-1})}_i^1-{(W_{j-1})}_i^0\big]  +[(W_{j-1})_i^0]^{1/m}.\]
Rewriting the above equations en terms of $u$ and $(U_j)_i^k$ we get
\[(\tau_{j-1})_i^0=\frac{\Delta t}{\Delta x} \big[[{(u_{j-1})}_i^1]^{m}-[{(u_{j-1})}_i^0]^{m}\big]  +(u_{j-1})_i^0-(u_{j})_i^0,\]

\[(U_j)_i^0=\frac{\Delta t}{\Delta x} \big[[{(U_{j-1})}_i^1]^{m}-[{(U_{j-1})}_i^0]^{m}\big]  +(U_{j-1})_i^0.\]
Subtracting them, we obtain
\begin{equation}\label{errorequation}
(e_j)_i^0=\frac{\Delta t}{\Delta x} \bigg[\big([{(u_{j-1})}_i^1]^{m}-[{(U_{j-1})}_i^1]^{m}\big)-\big([{(u_{j-1})}_i^0]^{m}-[{(U_{j-1})}_i^0]^{m}\big)\bigg]  +(e_{j-1})_i^0-(\tau_{j-1})_i^0.
\end{equation}
By mean value theorem, relation (\ref{errorequation}) turns into
\[(e_j)_i^0=\frac{\Delta t}{\Delta x} \bigg[\big({(u_{j-1})}_i^1-{(U_{j-1})}_i^1\big)\varphi'(\xi_1)-\big({(u_{j-1})}_i^0-{(U_{j-1})}_i^0\big)\varphi'(\xi_0)\bigg]  +(e_{j-1})_i^0-(\tau_{j-1})_i^0,\]
for some $\xi_0\in [{(u_{j-1})}_i^0,{(U_{j-1})}_i^0\big]$ and $\xi_1\in [{(u_{j-1})}_i^1,{(U_{j-1})}_i^1\big]$. Then

\[(e_j)_i^0=\frac{\Delta t}{\Delta x} \bigg[{(e_{j-1})}_i^1\varphi'(\xi_1)-{(e_{j-1})}_i^0\varphi'(\xi_0)\bigg]  +(e_{j-1})_i^0-(\tau_{j-1})_i^0,\]
that is,
\[(e_j)_i^0=\frac{\Delta t}{\Delta x}\varphi'(\xi_1){(e_{j-1})}_i^1  +\bigg[1-\frac{\Delta t}{\Delta x}\varphi'(\xi_0)\bigg](e_{j-1})_i^0-(\tau_{j-1})_i^0.\]
By assumption, all the coefficients that comes with $(e_{j-1})_i^k$ are positive, then

\begin{eqnarray}\label{eqerror2}
|(e_j)_i^0|&\leq&\frac{\Delta t}{\Delta x}\varphi'(\xi_1)|{(e_{j-1})}_i^1|  +\bigg[1-\frac{\Delta t}{\Delta x}\varphi'(\xi_0)\bigg]|(e_{j-1})_i^0|+\Lambda\nonumber\\
&\leq&\frac{\Delta t}{\Delta x}\varphi'(\xi_1)E_{j-1}  +\bigg[1-\frac{\Delta t}{\Delta x}\varphi'(\xi_0)\bigg]E_{j-1}+ \Lambda.
\end{eqnarray}
So the key point is to be able to control the difference between $\varphi'(\xi_1)$ and $\varphi'(\xi_0)$ and this is not difficult at all.
Since there exists a constant (assuming enough regularity of the solution $u$) $K\geq0$ such that
\[|(u_j)_i^1-(u_j)_i^0|\leq K\Delta x\ \  \mbox{ and }\ \ |(U_j)_i^1-(U_j)_i^0|\leq K\Delta x.\]
then (This formula is valid only for $m$ natural, and in this case $\varphi'(\xi_1)=\sum_{l=1}^m [(u_{j})_i^1]^{m-l}[(U_{j})_i^1]^{l-1}$ and $\varphi'(\xi_0)=\sum_{l=1}^m [(u_{j})_i^0]^{m-l}[(U_{j})_i^0]^{l-1}$)
\begin{eqnarray*}
(K_{j-1})_i^1&=&\sum_{l=1}^m [(u_{j})_i^1]^{m-l}[(U_{j})_i^1]^{l-1}\\
&\leq&\sum_{l=1}^m [(u_{j})_i^0+K\Delta x]^{m-l}[(U_{j})_i^0+K\Delta x]^{l-1}\\
&\leq&\sum_{l=1}^m [(u_{j})_i^0]^{m-l}[(U_{j})_i^0]^{l-1}+L\Delta x\\
&=&(K_{j-1})_i^0+R\Delta x.
\end{eqnarray*}
But it is not difficult to prove that in fact $|\varphi'(\xi_1)- \varphi'(\xi_0)|\leq R\Delta x$
where $R\geq0$ is a constant depending only on $m$, $K$, and $b_{max}$. The proof is only based in the idea that,  the function
\[g(x,y)=\frac{x^m-y^m}{x-y},\] 
is $C^1((0,\infty)\times (0\times \infty))$, and $\varphi'(\xi_0)=g((u_{j})_i^0,(U_{j})_i^0)$ and $\varphi'(\xi_1)=g((u_{j})_i^1,(U_{j})_i^1)$.
 Then from (\ref{eqerror2}) we obtain
\begin{eqnarray*}
|(e_j)_i^0| &\leq&\frac{\Delta t}{\Delta x}\bigg[\varphi'(\xi_0)+ R \Delta x \bigg]E_{j-1}  +\bigg[1-\frac{\Delta t}{\Delta x}\varphi'(\xi_0)\bigg]E_{j-1}+\Lambda\\
&=&\frac{\Delta t}{\Delta x}R\Delta x E_{j-1} +  E_{j-1}+\Lambda\\
&\leq& (1+R\Delta t)E_{j-1}+\Lambda.
\end{eqnarray*}
Remember also that we have $\displaystyle E_j\leq \max_{0<i<I} |(e_j)_i^0|+O(\Delta x^2)$ and $\Lambda =O(\Delta x^2)$. Then we have the next recurrence equation for the error
\begin{equation}\label{errorrec1}
E_j\leq (1+R\Delta t)E_{j-1}+\Lambda.
\end{equation}
We should also remember that $\Delta t=T/J$ where $T$ was the final time and $J$ the number of elements in the time discretization. Then, we can rewrite (\ref{errorrec1}) as
\begin{equation}\label{errorrec2}
E_j\leq (1+C\frac{1}{J})E_{j-1}+\Lambda,
\end{equation}
for some constant $C>0$.

Of course it is enough to bound $E_{J}$ to ensure the convergence of the method. 
Since $\Lambda = O(\Delta t^2)$ lets say that there exists a constant $L>0$ such that $\Lambda \leq L \Delta t^2$. Then, from (\ref{eqerror2}) we obtain
\begin{eqnarray*}
E_J&\leq&(1+C\frac{1}{J})E_{J-1}+L \Delta t^2\\
&\leq&(1+C\frac{1}{J})\bigg[E_{J-1}+L \Delta t^2\bigg]\\
&\leq&(1+C\frac{1}{J})\bigg[(1+C\frac{1}{J})E_{J-2}+L \Delta t^2+L \Delta t^2\bigg]\\
&=&(1+C\frac{1}{J})^2\bigg[E_{J-2}+2L \Delta t^2\bigg]\\
&\leq& . . . \leq (1+C\frac{1}{J})^J\bigg[E_0+L J \Delta t^2\bigg].
\end{eqnarray*}
But $(1+C\frac{1}{J})^J\leq e^{C}$, $J\Delta t=T$ and $E_0\leq D\Delta t^2 $ for some $D>$, so
\[E_J\leq e^C\bigg[D\Delta t^2 +L T \Delta t\bigg],\]
that is
\[E_J=O(\Delta t)=O(\Delta x + \Delta t).\]
\end{proof}
\subsection{Properties of the scheme}
We present now some properties that can be deduced from the numerical scheme (\ref{numericmethod}). They are the analougus of some of the energy stimulates presented in \cite{afracpor} and \cite{afracpor2} for the fractional porous medium equation. The first one is a  consequence of the  discrete maximum principle.

\begin{coro}{[Comparison principle]}\label{comprin}
Let $f,g \in C^2([-X,X])$ nonnegative such that $f(x)\geq g(x)$ $\forall x\in [-X,X]$ and let also $(W_j)_i^k$ and  $(Z_j)_i^k$ be the correspondent solutions of Problem (\ref{numericmethod}) with $m\geq1$. If $\Delta t\leq C(m,f) \Delta x$ then, 
\[(W_j)_i^k \geq(Z_j)_i^k \ \ \ \forall i,j,k.\]
\end{coro}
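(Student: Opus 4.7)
The plan is to proceed by induction on the time index $j$, using the fact that the scheme is discrete-harmonic in the interior to reduce the comparison at each time step to a comparison on the Dirichlet boundary $\Gamma_d$, where the only nonlinearity of the method lives. I set $(Y_j)_i^k := (W_j)_i^k - (Z_j)_i^k$ and aim to show $(Y_j)_i^k \geq 0$ for every $i, j, k$.

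For the base case ($j=0$), $Y_0$ solves the discrete five-point Laplace equation in the interior, vanishes on $\Gamma_h$, and equals $f^m(x_i) - g^m(x_i)$ on $\Gamma_d$. Since $f \geq g \geq 0$ and $m \geq 1$, the map $t \mapsto t^m$ is nondecreasing on $[0,\infty)$, hence $Y_0 \geq 0$ on $\Gamma_d$. The elementary discrete maximum principle for harmonic functions (the interior minimum is attained on the boundary) then yields $(Y_0)_i^k \geq 0$ throughout the mesh.

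For the inductive step, assuming $(W_{j-1})_i^k \geq (Z_{j-1})_i^k$ for all $i, k$, I first propagate the inequality to $\Gamma_d$ at time $t_j$. Introducing $(U_j)_i^k = [(W_j)_i^k]^{1/m}$ and $(V_j)_i^k = [(Z_j)_i^k]^{1/m}$, both update rules take the form of equation (\ref{recu}); subtracting them and applying the Mean Value Theorem to each of the two $m$-th power increments (exactly as in the proof of Theorem \ref{maxprin}) gives
\[(U_j)_i^0 - (V_j)_i^0 = \varphi'(\eta_1)\frac{\Delta t}{\Delta x}\big((U_{j-1})_i^1 - (V_{j-1})_i^1\big) + \bigg[1 - \varphi'(\eta_0)\frac{\Delta t}{\Delta x}\bigg]\big((U_{j-1})_i^0 - (V_{j-1})_i^0\big),\]
with $\eta_0, \eta_1$ sandwiched between the corresponding $U$ and $V$ values and hence bounded by $(b_{max})^{1/m}$ thanks to Theorem \ref{maxprin}. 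The induction hypothesis together with the monotonicity of $t \mapsto t^{1/m}$ makes both differences on the right nonnegative, and the CFL condition makes the coefficient $1 - \varphi'(\eta_0)\Delta t/\Delta x$ nonnegative as well, just as in the maximum principle argument. Note also that $g \leq f$ implies $C(m, g) \geq C(m, f)$, so the assumed CFL automatically covers the $Z$-scheme. Hence $(W_j)_i^0 \geq (Z_j)_i^0$; then $Y_j$ is again discrete-harmonic in the interior, zero on $\Gamma_h$, and nonnegative on $\Gamma_d$, so the discrete maximum principle closes the induction.

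The main obstacle is the single step of propagating the inequality across the nonlinear condition on $\Gamma_d$. I expect to handle it by the same MVT/convex-combination trick that drives Theorem \ref{maxprin}, which is what converts the CFL assumption into the positivity of the two coefficients multiplying the (nonnegative) differences at time $j-1$; everything else is the standard discrete maximum principle for the linear harmonic extension.
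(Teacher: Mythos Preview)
Your proof is correct and follows essentially the same route as the paper: induction on $j$, with the base case handled by the discrete maximum principle for the (linear) harmonic extension, and the inductive step handled by subtracting the two boundary updates, applying the Mean Value Theorem to each $m$-th power increment, and using the CFL condition to make the resulting coefficients nonnegative. The paper packages the base case slightly differently---it introduces an auxiliary solution $(H_j)_i^k$ with initial datum $h=(f^m-g^m)^{1/m}$ and invokes linearity of the $j=0$ step to identify $(W_0)-(Z_0)$ with $(H_0)$---but this is just a rewording of your direct argument on $Y_0$; your version is arguably cleaner, and your extra remark that $C(m,g)\geq C(m,f)$ (so the CFL covers both schemes) is a detail the paper omits.
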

\begin{proof}
Let $(H_j)_i^k$ be the solution of Problem (\ref{numericmethod}) with a nonnegative initial data given by $h=(f^m-g^m)^{1/m}$.  By the Discrete  Maximum Principle \ref{maxprin}, we have that $(H_j)_i^k\geq 0$. We recall that at time $j=0$ the scheme is linear and so $(W_0)_i^k-(Z_0)_i^k=(H_0)_i^k\geq 0$. Proceeding by induction, assume that $(W_j)_i^k-(Z_j)_i^k\geq0$ or equivalently $(U_j)_i^k-(V_j)_i^k\geq0$. The next two equations holds
\[(U_{j+1})_i^0=\frac{\Delta t}{\Delta x} \big([{(U_{j})}_i^1]^{m}-[{(U_{j})}_i^0]^{m}\big)  +(U_{j})_i^0.\]
\[(V_{j+1})_i^0=\frac{\Delta t}{\Delta x} \big([{(V_{j})}_i^1]^{m}-[{(V_{j})}_i^0]^{m}\big) +(U_{j})_i^0.\]
subtracting them and using the mean value theorem,
\begin{eqnarray*}
(U_{j+1})_i^0-(V_{j+1})_i^0&=&\frac{\Delta t}{\Delta x} \big(\varphi'(\xi_1)\left({(U_{j})}_i^1-{(V_{j})}_i^1\right) -\varphi'(\xi_0)\left({(U_{j})}_i^0-{(V_{j})}_i^0\right)\big)  +(U_{j})_i^0-(V_{j})_i^0\\
&=&\frac{\Delta t}{\Delta x}\varphi'(\xi_1)\left({(U_{j})}_i^1-{(V_{j})}_i^1\right) +\left[ 1- \frac{\Delta t}{\Delta x}\varphi'(\xi_0)\right] \left((U_{j})_i^0-(V_{j})_i^0\right).
\end{eqnarray*}
And thanks to our CFL condition and the induction hypothesis, al terms in the right hand side of the equations are positive and so $(U_{j+1})_i^0-(V_{j+1})_i^0\geq 0$.

\end{proof}

The following Discrete-$L^1$-Contraction property is the analogous of the one presented in \cite{afracpor} (Theorem 6.2). 

\begin{teor}{[$L^1$-Contraction]}
Under the assumptions of Corollary \ref{comprin} let $(U_j)_i^k=[(W_j)_i^k]^{1/m}$ and $(V_j)_i^k=[(Z_j)_i^k]^{1/m}$. The next contractions property holds,
\begin{equation}\label{l1cont}
\sum_{i=i}^{I-1} \left[(U_j)_i^0-(V_j)_i^0 \right]\leq\sum_{i=i}^{I-1} \left[(U_{j-1})_i^0-(V_{j-1})_i^0 \right],
\end{equation}
for all $j=1,. . . ,J$.
\end{teor}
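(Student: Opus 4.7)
The strategy is to reduce the contraction property to an inequality about the underlying discrete harmonic extension, which in turn collapses to a one-dimensional convexity statement for the column sums.

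First, I would write the boundary recursion (\ref{recu}) for both $U$ and $V$, subtract the two identities, and use $[(U_{j-1})_i^k]^m=(W_{j-1})_i^k$ and $[(V_{j-1})_i^k]^m=(Z_{j-1})_i^k$ to get
\begin{equation*}
(U_j)_i^0-(V_j)_i^0=(U_{j-1})_i^0-(V_{j-1})_i^0+\frac{\Delta t}{\Delta x}\bigl\{(W_{j-1})_i^1-(Z_{j-1})_i^1-(W_{j-1})_i^0+(Z_{j-1})_i^0\bigr\}.
\end{equation*}
Introducing $(P_j)_i^k:=(W_j)_i^k-(Z_j)_i^k$ and summing over $i=1,\ldots,I-1$, the identity reduces (\ref{l1cont}) to
\begin{equation*}
\sum_{i=1}^{I-1}(P_{j-1})_i^1 \;\leq\; \sum_{i=1}^{I-1}(P_{j-1})_i^0.
\end{equation*}

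Next, I would exploit that $P_{j-1}$ is a difference of two discrete harmonic extensions, hence itself discretely harmonic in the interior, with homogeneous data on $\Gamma_h$ (so $(P_{j-1})_0^k=(P_{j-1})_I^k=(P_{j-1})_i^K=0$) and nonnegative data on $\Gamma_d$ by Corollary \ref{comprin}. Writing $S^k:=\sum_{i=1}^{I-1}(P_{j-1})_i^k$ and summing the five-point Laplace equation over $i=1,\ldots,I-1$, the lateral boundary terms at $i=0,I$ drop out and one obtains the \emph{discrete column identity}
\begin{equation*}
S^{k+1}-2S^k+S^{k-1}=(P_{j-1})_1^k+(P_{j-1})_{I-1}^k\;\geq\;0,\qquad 0<k<K,
\end{equation*}
since the discrete maximum principle for the five-point scheme forces $(P_{j-1})_i^k\geq 0$ throughout $\Omega$. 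Thus $k\mapsto S^k$ is discretely convex on $\{0,\ldots,K\}$ with $S^K=0$ and $S^0\geq 0$.

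Finally, for such a discretely convex sequence the first differences $S^{k+1}-S^k$ are nondecreasing; if we had $S^1-S^0>0$, then $S^K\geq S^0+K(S^1-S^0)>0$, contradicting $S^K=0$. Therefore $S^1\leq S^0$, which inserted into the first identity yields (\ref{l1cont}).

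The only nontrivial observation is that summing the discrete Laplace equation over the tangential index collapses the two-dimensional harmonicity to a one-dimensional convexity for the column sums, at which point the homogeneous Dirichlet condition at $k=K$ forces the contraction. The rest is algebraic manipulation together with the already established comparison principle.
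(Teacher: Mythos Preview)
Your proof is correct and follows essentially the same route as the paper's: subtract the boundary recursions, reduce to $\sum_i(P_{j-1})_i^1\leq\sum_i(P_{j-1})_i^0$ for the difference $P=W-Z$, then sum the five-point Laplace identity over $i$ to obtain a second-order relation for the column sums $S^k$, and combine nonnegativity of $P$ (from Corollary~\ref{comprin}) with the homogeneous condition $S^K=0$. The only cosmetic difference is the packaging: the paper runs a downward induction from $k=K-1$ to conclude $S^{k-1}\geq S^k$ for all $k$, whereas you phrase the same computation as discrete convexity of $k\mapsto S^k$ and derive $S^1\leq S^0$ by a contradiction argument; these are two readings of the same recurrence $S^{k+1}-2S^k+S^{k-1}=(P_{j-1})_1^k+(P_{j-1})_{I-1}^k\geq 0$.
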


\begin{nota}
A mass decay property is a direct consequence of (\ref{l1cont}).
\end{nota}

\begin{proof}
We will prove the result for $j=1$ for simplicity, but and induction method could be used to prove it for a general $j$. We have the next two relations for the solutions at the boundary

\[
(U_1)_i^0=\frac{\Delta t}{\Delta x} \big((W_{0})_i^1-f^m(x_i)\big)  +f(x_i).
\]
\[
(V_1)_i^0=\frac{\Delta t}{\Delta x} \big((Z_{0})_i^1-g^m(x_i)\big)  +g(x_i).
\]
Subtracting both and summing for all $i$, we get

\[\sum_{i=1}^{I-1}[(U_1)_i^1-(V_1)_i^1]=\frac{\Delta t}{\Delta x} \left(\sum_{i=1}^{I-1}[(W_0)_i^0-(Z_0)_i^0]-
\sum_{i=1}^{I-1}[f^m(x_i)-g^m(x_i)] \right)+\sum_{i=1}^{I-1}[f(x_i)-g(x_i)].\]
So, if we prove that 
\[\sum_{i=1}^{I-1}[(W_1)_i^0-(Z_1)_i^0]-\sum_{i=1}^{I-1}[f^m(x_i)-g^m(x_i)] \leq 0,\]
we are done. To prove this, we first recall that, for any $k=1, . . . ,K-1$ we have
\[(W_j)_{i+1}^{k}+(W_j)_{i-1}^{k}+(W_j)_{i}^{k+1}+(W_j)_{i}^{k-1}=4(W_j)_{i}^{k}.\]
\[(Z_j)_{i+1}^{k}+(Z_j)_{i-1}^{k}+(Z_j)_{i}^{k+1}+(Z_j)_{i}^{k-1}=4(Z_j)_{i}^{k}.\]
We call $(H_j)_i^k=(W_j)_{i}^{k}-(Z_j)_{i}^{k}$. Note that, thanks to the comparison principle, $(H_j)_i^k\geq0$. Summing up,
\begin{eqnarray*}
\sum_{i=1}^{I-1}(H_j)_{i}^{k-1}&=&4\sum_{i=1}^{I-1}(H_j)_{i}^{k}-\sum_{i=1}^{I-1}(H_j)_{i+1}^{k}-\sum_{i=1}^{I-1}(H_j)_{i-1}^{k}-\sum_{i=1}^{I-1}(W_j)_{i}^{k+1}\\
&=& 4\sum_{i=1}^{I-1}(H_j)_{i}^{k}-\sum_{i=2}^{I-1}(H_j)_{i}^{k}-\sum_{i=1}^{I-2}(H_j)_{i}^{k}-\sum_{i=1}^{I-1}(H_j)_{i}^{k+1}\\
&=& 2\sum_{i=1}^{I-1}(H_j)_{i}^{k}+(H_j)_{1}^{k}+(H_j)_{I-1}^{k}-\sum_{i=1}^{I-1}(H_j)_{i}^{k+1}.
\end{eqnarray*}
Now, remember that $(W_j)_{i}^{K}, (Z_j)_{i}^{k},(H_j)_{i}^{k}\equiv0$ thanks to our homogenous dirichlet boundary condition. Then, using the above relation with $k=K-1$ we get
\begin{eqnarray}
\sum_{i=1}^{I-1}(H_j)_{i}^{K-2}&=&2\sum_{i=1}^{I-1}(H_j)_{i}^{K-1}+(H_j)_{1}^{k}+(H_j)_{I-1}^{K-1}-\sum_{i=1}^{I-1}(H_j)_{i}^{K}\\
&\geq& \sum_{i=1}^{I-1}(H_j)_{i}^{K-1}\nonumber.
\end{eqnarray}
By induction we get then,
\[\displaystyle \sum_{i=1}^{I-1}[(W_j)_{i}^{k-1} -(Z_j)_{i}^{k-1}]\geq  \sum_{i=1}^{I-1}[(W_j)_{i}^{k}-(Z_j)_{i}^{k}],\]
for all $k=0,. . . ,K-1$, which in particular states that,
\[\displaystyle \sum_{i=1}^{I-1}[f^m(x_i)-g^m(x_i)]\geq  \sum_{i=1}^{I-1}[(W_j)_{i}^{1}-(Z_j)_{i}^{1}].\]
\end{proof}

As we said, this $L^1$ contraction property directly implies a total mass decay property,
\begin{equation}
\sum_{i=i}^{I-1} (U_j)_i^0\leq\sum_{i=i}^{I-1} (U_{j-1})_i^0
\end{equation}
This  mass decay is a consequence of the homogenous dirichlet boundary data that we have artificially imposed. We show that, if we pose the scheme in $\RR$, then the expected conservation of mass holds.

\begin{teor}{[Conservation of mass]}
Let $f\in L^1(\RR)$ nonnegative. Let also $(W_j)_i^k$ be the correspondent solution of Problem (\ref{numericmethod}) posed in $\rr^2_+=(-\infty.\infty)\times[0,\infty)$ with $m\geq1$. Then

\begin{equation}
\sum_{i=i}^{I-1} (U_j)_i^0=\sum_{i=i}^{I-1} (U_{j-1})_i^0
\end{equation}
for all $j=1,. . . ,J$.
\end{teor}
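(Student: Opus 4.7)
The overall plan is to derive a discrete first-order identity from the boundary update, sum it across all $i\in\mathbb{Z}$, and reduce the conservation claim to the $\ell^1$-invariance of the discrete harmonic extension along horizontal lines. I would then argue that this $\ell^1$-invariance follows from the translation-invariance of summation in $i$ together with a boundedness estimate inherited from the Discrete Maximum Principle (Theorem \ref{maxprin}).

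More precisely, I would first rewrite the boundary rule from (\ref{numericmethod}) in the form
\begin{equation*}
(U_j)_i^0 - (U_{j-1})_i^0 = \frac{\Delta t}{\Delta x}\bigl[(W_{j-1})_i^1 - (W_{j-1})_i^0\bigr],
\end{equation*}
and sum over $i\in\mathbb{Z}$, which reduces the statement to showing $S_0 = S_1$ for $S_k := \sum_{i\in\mathbb{Z}} (W_{j-1})_i^k$. Next I would sum the interior discrete Laplace equation over $i\in\mathbb{Z}$; because shifting the summation index by $\pm 1$ leaves a summable series unchanged, the two horizontal neighbours each collapse into $S_k$, giving
\begin{equation*}
S_{k+1} + S_{k-1} = 2S_k, \qquad k\geq 1.
\end{equation*}
Thus $\{S_k\}_{k\geq 0}$ is affine in $k$: $S_k = S_0 + k(S_1-S_0)$. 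By the Discrete Maximum Principle, the quantities $(W_{j-1})_i^k$ are nonnegative and uniformly bounded by $b_{max}$, and the discrete harmonic extension of an $\ell^1$ boundary trace is $\ell^1$ with norm controlled uniformly in $k$ (the discrete analog of the Poisson-kernel bound on the upper half-plane). Hence $\{S_k\}$ is bounded, and the only bounded affine sequence is constant, so $S_1 = S_0$. An induction on $j$ then propagates the identity to every $j=1,\ldots,J$.

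The main obstacle will be the rigorous justification of summability: in the unbounded setting there is no artificial $\Gamma_h$ truncation, so one must argue from the integrability of $f^m\in L^1(\mathbb{R})$ and the nonnegativity of the iterates that the discrete harmonic extension of $(W_{j-1})_{\cdot}^0$ is summable in $i$ uniformly in $k$, and that the interchange of the sum $\sum_i$ with the shifts and with the discrete Laplacian is legal. This can be handled via the explicit discrete Poisson kernel for the upper half-lattice $\mathbb{Z}\times\mathbb{Z}_{\geq 0}$, whose horizontal rows have unit mass, so $\ell^1$-norms are preserved from row to row and the rearrangement of the doubly-indexed nonnegative sums is automatic by Tonelli. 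Apart from this summability technicality, the rest of the argument (translation invariance of the sum, the affine characterisation, and the induction in $j$) is straightforward.
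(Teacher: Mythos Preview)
Your proposal follows the same skeleton as the paper: rewrite the boundary update as $(U_j)_i^0 - (U_{j-1})_i^0 = \frac{\Delta t}{\Delta x}\bigl[(W_{j-1})_i^1 - (W_{j-1})_i^0\bigr]$, sum over $i\in\mathbb{Z}$, reduce to $S_0 = S_1$ for $S_k := \sum_i (W_{j-1})_i^k$, and use the summed interior equation to obtain the second-order recurrence $S_{k+1}-2S_k+S_{k-1}=0$, so that $\{S_k\}$ is affine in $k$.

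The only substantive difference is in how the non-constant affine case is excluded. You invoke the discrete Poisson kernel on $\mathbb{Z}\times\mathbb{Z}_{\geq0}$, whose rows have unit mass, to conclude that $\{S_k\}$ is bounded and therefore constant. The paper instead argues by contradiction in two directions: if $S_1 < S_0$ then $S_k \to -\infty$, contradicting nonnegativity from the maximum principle; if $S_1 > S_0$ then $S_k \to +\infty$, which the paper rules out by appealing to the mass-decay result just proved for the bounded-domain scheme. Your route is more self-contained (it does not lean on the previous theorem), though note that the unit-mass property of the Poisson kernel, applied to nonnegative data, is already the identity $S_k = S_0$ outright, so once you establish it the affine characterisation becomes a consistency check rather than the heart of the argument. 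The paper's route avoids any explicit Poisson-kernel machinery and gets by with sign information plus the earlier $L^1$-contraction, at the price of a somewhat indirect second contradiction.
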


\begin{proof}
Directly from the numerical method we have the following relation for the solution at the boundary for every $i\in \mathbb{Z}$,

\[(U_1)_i^0=\frac{\Delta t}{\Delta x} \big[{(W_{0})}_i^1-{(W_{0})}_i^0\big]  +(U_{0})_i^0.\]
Summing up on all $i\in \mathbb{Z}$,

\[\sum_{i=-\infty}^\infty (U_1)_i^0=\frac{\Delta t}{\Delta x} \left[\sum_{i=-\infty}^\infty{(W_{0})}_i^1-\sum_{i=-\infty}^\infty {(W_{0})}_i^0\right]  +\sum_{i=-\infty}^\infty (U_{0})_i^0\]
so it is enough to prove that  $\displaystyle \sum_{i=-\infty}^\infty{(W_{0})}_i^1=\sum_{i=-\infty}^\infty {(W_{0})}_i^0$ . We know from our numerical scheme that  for any $\forall k\geq 1$ we have
\[(W_0)_{i}^{k}+(W_0)_{i-1}^{k}+(W_0)_{i}^{k+1}+(W_0)_{i}^{k-1}=4(W_0)_{i}^{k},\]
Summing up,
\begin{eqnarray}\label{consmass}
\sum_{i=-\infty}^{\infty}(W_0)_{i}^{k-1}&=&4\sum_{i=-\infty}^{\infty}(W_0)_{i}^{k}-\sum_{i=-\infty}^{\infty}(W_0)_{i+1}^{k}-\sum_{i=-\infty}^{\infty}(W_0)_{i-1}^{k}-\sum_{i=-\infty}^{\infty}(W_0)_{i}^{k+1}\nonumber\\
&=&2\sum_{i=-\infty}^{\infty}(W_0)_{i}^{k}-\sum_{i=-\infty}^{\infty}(W_0)_{i}^{k+1}.
\end{eqnarray}
Now, lets call $\displaystyle a_k=\sum_{i=-\infty}^{\infty}(W_0)_{i}^{k}$. Since we have finite total mass, we can assume that $a_0=1$. Then relation (\ref{consmass}) can be interpreted as the next recurrence succession,

\[ a_{k+2}=2a_{k+1}-a_k \mbox{ with } a_0=1 \]

The general solution of this recurrence is $a_k=c+kd$, where $c$ and $d$ are two constants which depends on the initial condition. But here we have only the initial condition $a_0=1$, which gives
$a_k=1+kd$.
What we want at this point is a second initial conditions $a_1=1$ and then the general solution will be \[\displaystyle\sum_{i=-\infty}^{\infty}(W_0)_{i}^{k}=a_k=1\] 
Proceeding by contradiction, assume $a_1<1$, then 
\[a_k=1+k(a_1-1)\stackrel{k\to\infty}{\lra}-\infty\]
which is a contraction with the maximum principle. The other options is assuming $a_1>1$ but then, you get $a_k\stackrel{k\to\infty}{\lra}\infty$ which is again a contradiction with the result of loosing mass in the bounded domain. So $a_1=1$ is the only option. Then we get the desired result 
\[\sum_{i=-\infty}^\infty{(W_{0})}_i^1=a_1=a_0=\sum_{i=-\infty}^\infty {(W_{0})}_i^0.\]
\end{proof}
\section{Comments}
\subsection{Proofs for  $N>1$}
As we have said before, all the proofs written here are also valid when $N$ is greater than one, but in order to convince the reader we will at least formulate the numerical scheme in this case.

 We need to introduce some multi index  notation for the spatial discretization, $i=(i_1,. . . , i_N)$  with $i_l=0, . . ., I_l$ for  $l=1, . . . , N$ where $I_l$ is the number of nodes of our mesh in the $l$-esim dimensional direction. We will also use say that
\[1_l=(0,. . . , \underbrace{1}_{l-esim}, . . . ,0)\]
In this way, Problem (\ref{numericmethod}) becomes
 
 \begin{equation}\label{numericmethodRN}\displaystyle
\left\{ \begin{array}{ll}\displaystyle
\frac{\displaystyle \sum_{l=1}^N \big[(W_j)_{i+1_l}^{k}+(W_j)_{i-1_l}^{k}\big]+(W_j)_{i}^{k+1}+(W_j)_{i}^{k-1}-2(N+1)(W_j)_{i}^{k}}{\Delta x^2} =0, &  0<i<I,\\ & 0<k<K,\\[3mm]\displaystyle
(W_j)_i^0=\bigg[\frac{\Delta t}{\Delta x} \big({(W_{j-1})}_i^1-{(W_{j-1})}_i^0\big)  +[(W_{j-1})_i^0]^{1/m}\bigg]^m, & 0<i<I,\\[3mm]\displaystyle
(W_j)_i^k=0,& \mbox{otherwise, }\\
\end{array} \right.
\end{equation}
where the solution of

\begin{equation}\nonumber\displaystyle
\left\{ \begin{array}{ll}\displaystyle
\frac{ \sum_{l=1}^N \big[(W_j)_{i+1_l}^{k}+(W_j)_{i-1_l}^{k}\big]+(W_0)_{i}^{k+1}+(W_0)_{i}^{k-1}-2(N+1)(W_0)_{i}^{k}}{\Delta x^2} =0, &  0<i<I,\\
& 0<k<K,\\[3mm]\displaystyle
(W_0)_i^0=f^m(x_i), & 0<i<I, \\[3mm]\displaystyle
(W_j)_i^k=0,& \mbox{otherwise, }\\
\end{array} \right.
\end{equation}
is used to start the numerical method.

With this multi-index notation the proofs for the local truncation error, existence, uniqueness  and convergence are valid without any change.

\subsection{Signed initial data}

We have assume that the initial data $f$ is a nonnegative function only for simplicity. One can easy observe that all the proofs are valid for $f$ with sign with very small changes. Again, problems could come proving the required regularity  for the theoretical solutions. Lets call 
\[ b_{min}= \min_{x}\{f^m(x),0\}.\] 
Then, in \textbf{Theorem \ref{maxprin}} the same argument holds to prove $(U_j)_i^0\geq b_{min}$ and so, the new maximum/minimum principle states, under the same assumptions of the previous one, that for all $i,j,k$
\[b_{min}\leq (U_j)_i^0\leq b_{max}. \]
In \textbf{Theorem \ref{convergence}} nothing change since we are always talking about errors, and so we are working with absolute values. 

\section{Avoiding the regularity problems: Comparison with the problem in the whole $\mathbb{R}^{N+1}_+$}
As we have said before, in most of the proofs we have assumed high regularity for the theoretical solution in the bounded domain. This fact allow us to use the required order taylor expansion. This kind of regularity results are proven in \cite{afracpor} for the problem posed in $\mathbb{R}^{N+1}_+$. In this section we propose another numerical approach that avoids the regularity problem in the bounded domain but in return gives an extra condition for the convergence.

We will compare the solution to the numerical scheme (\ref{numericmethod}) posed in the bounded domain $[-X,X]\times[0,X]$  with the theoretical solution to (\ref{problemext}) posed in $\mathbb{R}^{N+1}_+$.  Obviously, the comparison is only done where the numerical scheme is well defined.

A new difficulty appears with this comparison. Now $w\not = 0$ in $\Gamma_h$ and so we cannot have convergence for a fixed domain. The idea is making $\Omega \lra \mathbb{R}^{N+1}_+$  as $\Delta x\lra 0$ with a certain rage of convergence.
Note that an extra error coming from the lateral boundary will be introduced.

In \cite{jlbar}, an upper bound for the solution with compactly supported initial data is found passing through the Barenblatt solutions of problem (\ref{problem}). The upped bound is,

\[u_M^*(x,t)=(t+1)^{-\alpha} F(|x|(t+1)^{-\beta}),\]
where $F(\xi)\leq C |\xi|^{N+1}$ and
\[\alpha=\frac{N}{N(m+1)+1}, \ \ \ \ \beta=\frac{1}{N(m+1)+1}.\]
Since $-\alpha+\beta(N+1)=\beta$, we have the next bound in $\Gamma_h$ depending only on $X$,
\[u_M^*(X,t)\leq C(t+1)^{-\alpha+\beta(N+1)}\frac{1}{X^{N+1}}\leq C(t+1)^{\beta}\frac{1}{X^{N+1}}\leq C\cdot (T+1)^\beta\frac{1}{X^2}.\]
Then, if we impose the following condition in the domain, 
\begin{equation}
|X|\geq K\frac{1}{\Delta x},
\end{equation}
we can adapt the proofs of Theorems \ref{localtruncerror} and \ref{convergence} to obtain the whised convergence.

In \textbf{Theorem \ref{localtruncerror}}, the local truncation error in the interior of $\Omega$ nodes and in $\Gamma_d$ still being the same but is not zero anymore in $\Gamma_h$. Now if $(x_i,y_k) \in \Gamma_h$,
\[(\tau_j)_i^k=(w_j)_i^k\leq C \frac{1}{|X|^2}\leq D\Delta x^2.\]
and so $\Lambda= O(\Delta x^2)$ as before.

In \textbf{Theorem \ref{convergence}}, again the unique change is that the error in $\Gamma_h$ is not zero. But, if $(x_i,y_k) \in \Gamma_h$,
\[(e_j)_i^k=(w_j)_i^k-(W_j)_i^k=(w_j)_i^k\leq D\Delta x^2.\]
And so $E_J=O(\Delta x+ \Delta t)$.

We get then the next result,
\begin{teor}
Let $w$ be the solution to Problem (\ref{problemext}) (posed in $\RR$) and $(W_j)_i^k$  be the solution to system (\ref{numericmethod}) (posed in the bounded domain $\Omega=[-X,X]\times[0,X]$) with $m\geq 1$ and compactly supported initial data $f\in L^1(\RR)$. Assume that:

1. There exists two constants $C(m,f), D>0$ such that 
\[D\Delta x\leq \Delta t\leq C(m,f)\Delta x. \]

2. There exists a constant $K>0$ such that
\[|X|\geq K\frac{1}{\Delta x}\]
Then
\[\max_{i,j,k}|w(x_i,y_k,t_j)-(W_j)_i^k|=O(\Delta x+\Delta t).\]
\end{teor}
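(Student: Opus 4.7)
The plan is to mirror the proofs of Theorems \ref{localtruncerror} and \ref{convergence}, tracking the single new source of error introduced by the fact that $w$, being the solution to \eqref{problemext} in the whole half-space, does not vanish on the artificial lateral boundary $\Gamma_h$.

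First, I would establish the decay estimate on $\Gamma_h$ that justifies hypothesis 2. Using the Barenblatt-type upper bound $u_M^*$ from \cite{jlbar}, together with the relation $-\alpha + \beta(N+1) = \beta$ and the polynomial decay of the profile $F$ at infinity, one gets $u(x,t) \leq C(T+1)^\beta /|x|^{N+1}$ for $|x|$ large. Passing from $u$ to the harmonic extension $w = E(u^m)$ in the half-space and evaluating on $\Gamma_h$ then yields the pointwise bound $w(x,y,t) \leq C(T+1)^\beta /|X|^{2}$, uniformly in $t \in [0,T]$. Under hypothesis 2, $|X| \geq K/\Delta x$, which gives $w \leq D \Delta x^2$ on $\Gamma_h$.

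Next, I would revisit the local truncation error computation of Theorem \ref{localtruncerror}. The interior and $\Gamma_d$ contributions remain $O(\Delta t(\Delta x + \Delta t))$ as before. On $\Gamma_h$ the truncation error is no longer zero; plugging $w$ into the homogeneous discrete Dirichlet condition $(W_j)_i^k = 0$ yields $(\tau_j)_i^k = (w_j)_i^k$, which is $O(\Delta x^2)$ by the previous step. Hence $\Lambda = O(\Delta x^2)$ is preserved under $\Delta t \sim \Delta x$. Likewise, in the convergence argument of Theorem \ref{convergence}, the recurrence $E_j \leq (1 + R \Delta t) E_{j-1} + \Lambda$ derived from the mean-value-theorem manipulation at interior and $\Gamma_d$ nodes carries over verbatim, while on $\Gamma_h$ the error is simply $|(e_j)_i^k| = |(w_j)_i^k| \leq D \Delta x^2$, which is absorbed into $\Lambda$. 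Iterating and applying $(1 + C/J)^J \leq e^C$ with $J \Delta t = T$ gives $E_J = O(\Delta x + \Delta t)$, and the inequality $F_J \leq C(m,f) E_J$ (established in Subsection 3.5) yields the stated estimate.

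The main obstacle is the first step: transferring the Barenblatt-type decay of $u$ to a pointwise decay of its harmonic extension $w$ on $\Gamma_h$ with exactly the $1/|X|^2$ rate that forces the scaling $|X| \geq K/\Delta x$. Once that bound is in hand, the remainder of the argument amounts to re-reading the earlier proofs with an extra inhomogeneous $O(\Delta x^2)$ term supported on $\Gamma_h$, which the Gronwall-type iteration absorbs without affecting the overall rate.
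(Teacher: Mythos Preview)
Your proposal is correct and follows essentially the same route as the paper: bound the tail via the Barenblatt-type upper estimate from \cite{jlbar}, use hypothesis~2 to convert that into an $O(\Delta x^2)$ contribution on $\Gamma_h$, and then rerun the proofs of Theorems~\ref{localtruncerror} and~\ref{convergence} with this extra inhomogeneous term. The point you flag as the ``main obstacle''---passing from the decay of $u$ to a pointwise bound on the harmonic extension $w$ along $\Gamma_h$---is precisely the step the paper handles only informally, so your caution there is well placed rather than a divergence from the intended argument.
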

\begin{nota}
Condition 2 says that $|X|\stackrel{\Delta x \to 0}{\lra} \infty$ and so $\Omega \stackrel{\Delta x \to 0}{\lra} \RR$.
\end{nota}
\section{Extension to a more general fractional diffusion equation}
It is also possible to formulate a numerical method for more general equations 
\begin{equation}\label{problemphi}
\left\{
\begin{array}{ll}
\displaystyle\frac{\partial u}{\partial t}(x,t)+\laph \varphi(u)(x,t)=0,& x\in \RR, \ t>0,\\[3mm]
u(x,0)=f(x), & x \in \RR,
\end{array}
\right.
\end{equation}
if $\varphi\in C^2(\rr)$ such that $\varphi'\geq 0$ and $\varphi', \varphi''$ locally bounded. We also need existence of  $\varphi^{-1}\in C^2(\rr)$. In this case, we have an associated extension problem
\begin{equation}\label{problemextphi}\displaystyle
\left\{ \begin{array}{ll}\displaystyle
\Delta w(x,y,t)=0,& x \in \RR,\ y>0,\ t>0, \\[3mm]
\displaystyle\frac{\partial \varphi^{-1}(w)}{\partial t}(x,0,t)=\frac{\partial w}{\partial y}(x,0,t),& x \in \RR,\ y=0,\ t>0, \\[3mm]
w(x,0,0)=\varphi(f(x)) , & x\in \RR,
\end{array} \right.
\end{equation}
as before. In \cite{afracpor3} they study the theory for this problem with 
\[\varphi(u)=\log(u+1),\]
as the natural limit as $m\to 0$ of the problem with \[\varphi(u)=\frac{(u+1)^m-1}{m}.\]
We can also state the corresponding finite difference method associated to this problem posed in the bounded domain,
\begin{equation}\label{numericmethodphi}\displaystyle
\left\{ \begin{array}{ll}\displaystyle
\frac{(W_j)_{i+1}^{k}+(W_j)_{i-1}^{k}+(W_j)_{i}^{k+1}+(W_j)_{i}^{k-1}-4(W_j)_{i}^{k}}{\Delta x^2} =0, &  0<i<I,
 0<k<K\\[3mm]\displaystyle
(W_j)_i^0=\varphi\bigg(\frac{\Delta t}{\Delta x} \big({(W_{j-1})}_i^1-{(W_{j-1})}_i^0\big)  +\varphi^{-1}[(W_{j-1})_i^0]\bigg), & \mbox{if } 0<i<I\\[3mm]\displaystyle
(W_j)_i^k=0,& \mbox{otherwise, }\\
\end{array} \right.
\end{equation}
where the solution of

\begin{equation}\nonumber\displaystyle
\left\{ \begin{array}{ll}\displaystyle
\frac{(W_0)_{i+1}^{k}+(W_0)_{i-1}^{k}+(W_0)_{i}^{k+1}+(W_0)_{i}^{k-1}-4(W_0)_{i}^{k}}{\Delta x^2} =0, &  0<i<I,\
 0<k<K\\[3mm]\displaystyle
(W_0)_i^0=\varphi(f(x_i)), & \mbox{if } 0<i<I,\\[3mm]\displaystyle
(W_0)_i^k=0,& \mbox{otherwise, }\\
\end{array} \right.
\end{equation}
is used to start the numerical method. 

All proofs can be adapted, without any extra effort, to this method. Of course we have to formulate a more general constant (\ref{constant}) related to $\varphi$. We recall that the restriction for this constant comes from the required positivity of the coefficient $1-\varphi'(\xi)\Delta t/\Delta x$ that appears in (\ref{recu2}). This constant becomes
\begin{equation}\label{constantphi1}
C(\varphi,f)=\bigg(\max_{x\in[0,b_{max}]}\{\varphi'(x)\}\bigg)^{-1}.
\end{equation}
\section{Numerical results}
\subsection{Analysis of the errors}
We present now two different error analysis of the numerical solutions. The first one is obtained comparing solutions with some decreasing $\Delta x$ with a solution generated with a very small $\Delta x$. The results are presented in Figure \ref{err1} and Table 1.
\begin{figure}[h!]
	\begin{center}
		\includegraphics[width=0.9\textwidth]{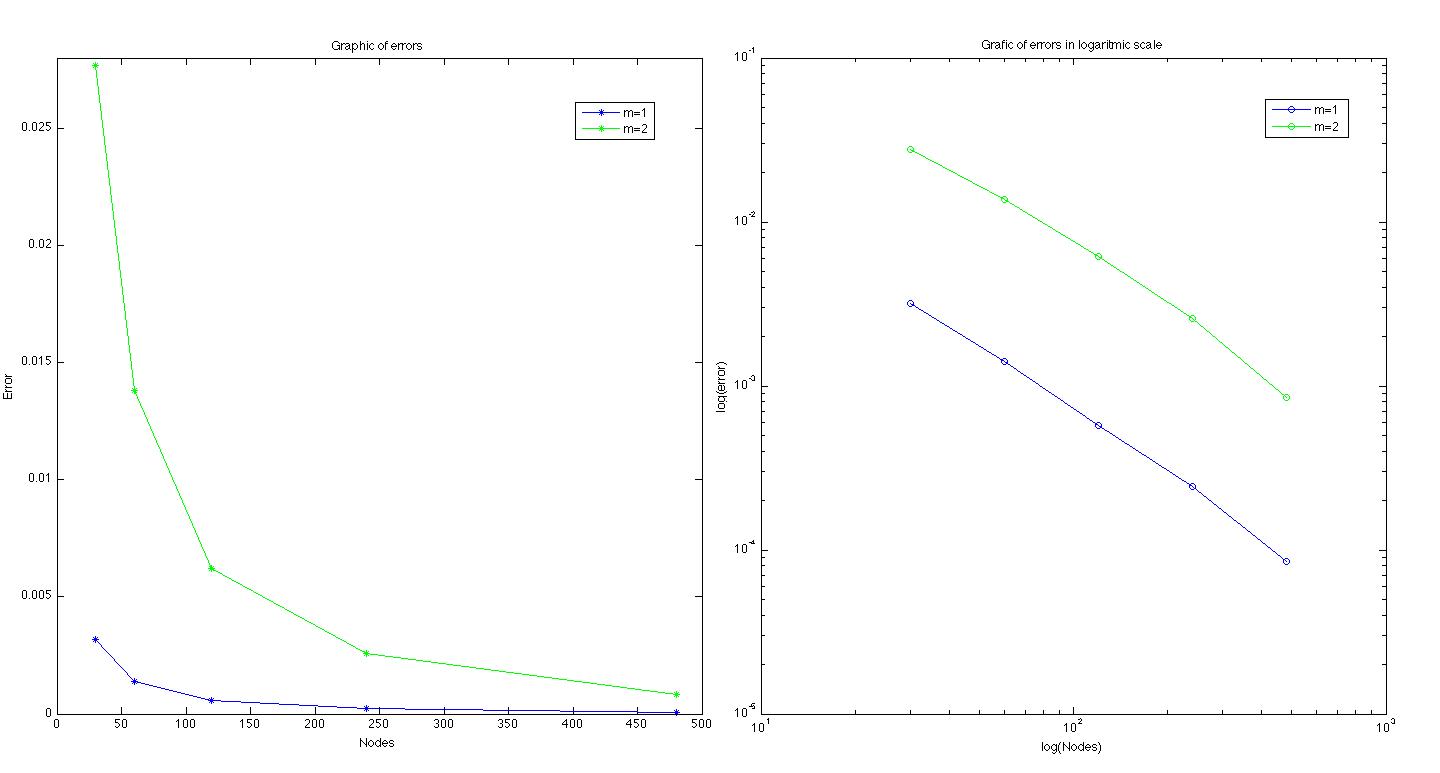}
		\caption{First analysis of the error for m=1 and m=2}     
        		\label{err1}
	\end{center}
\end{figure}

\begin{table}[h!]
$$
 \begin{tabular}{|c||c|c|c|}
  \hline
 $m$ & $\Delta x=\Delta t $&$Nodes$ & $Error$\\
 \hline
 \hline
  1	&  0.2 	&30     & 0.0031733       \\
   	&   0.1 	& 60           &0.0014071                \\
   	&  0.05	&120    &    0.0005760            \\
     	&  0.025	& 240     &  0.0002443             \\
   	&  0.0125	& 480     &  0.0000852      \\
  \hline
 \end{tabular}
 $$
 \end{table}
 \begin{table}[h!]
 $$
 \begin{tabular}{|c||c|c|c|}
  \hline
 $m$ & $\Delta x=\Delta t $&$Nodes$ & $Error$\\
 \hline
 \hline
   2	&  0.2 	&30     & 0.0276966      \\
   	&   0.1 	& 60           &0.0137975                \\
   	&  0.05	&120    &    0.0061930            \\
     	&  0.025	& 240     &  0.0025897             \\
   	&  0.0125	& 480     &  0.0008505      \\
\hline
 \end{tabular}
$$
\caption{First analysis of the error for $m=1$ and $m=2$.}    
\end{table}

The second way of computing the errors can only be done for the case $m=1$ since we have and explicit solution when the initial data is a Dirac delta. The idea is to take as initial data for our method the explicit solution given by
\[u(x,t)=C_N\frac{t}{|x|^2+t^2},\]
at time t=1. So the error will be be computed with the difference of the solution of the method with initial data 
\[f(x)=C_N\frac{1}{|x|^2+1}\]
at time $T=1$ and the real solution at time $T=1$, that is, the solution of the problem with initial data the dirac Delta at time $T=2$. We are going to compare the a real solution in the whole space with a numerical solution computed in a bounded domain so we choose a very large domain in order to minimize the errors that comes from the tails. The chosen domain is $\Omega=[-100,100]\times[0,100]$.

\begin{figure}[h!]
	\begin{center}
		\includegraphics[width=0.9\textwidth]{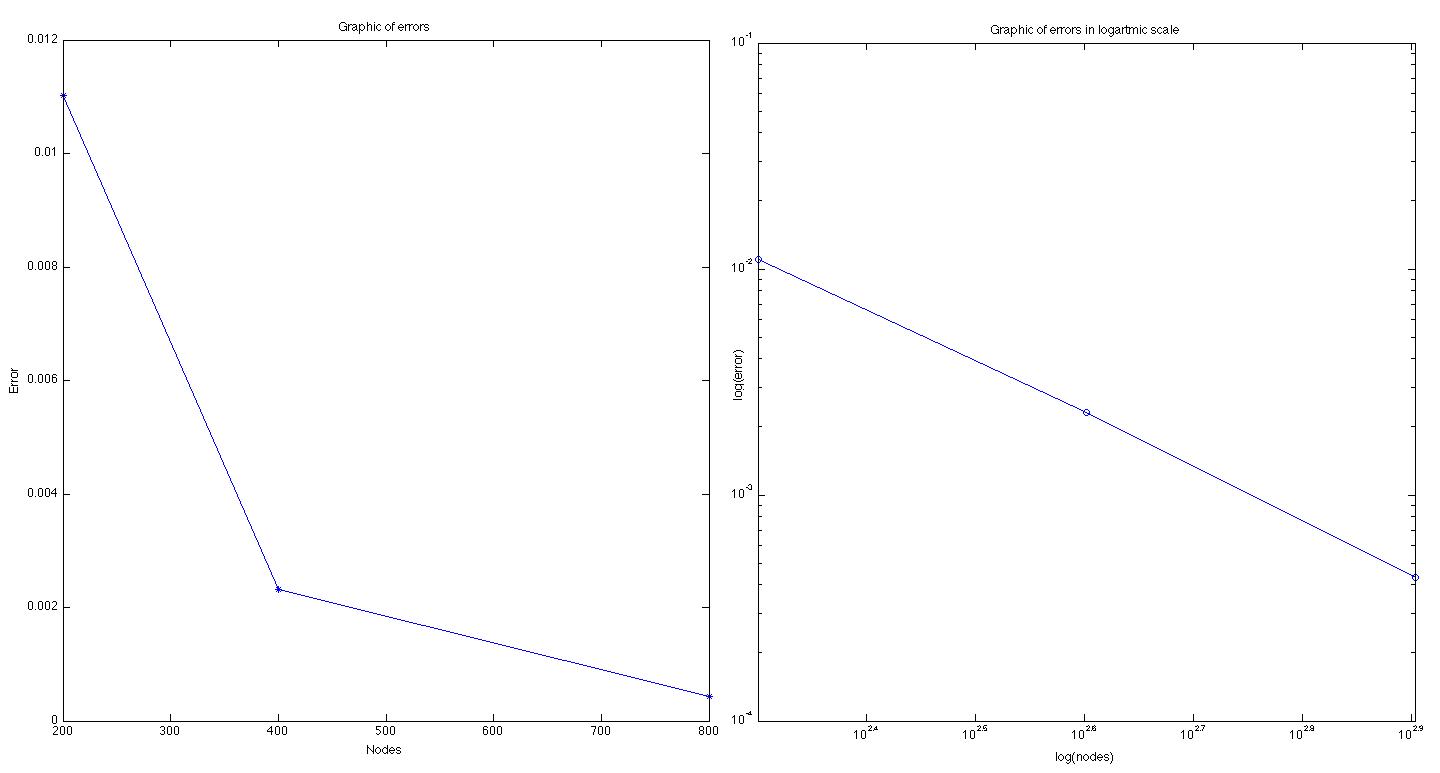}
		\caption{Second error analysis of the error for m=1.}     
        		\label{m1}
	\end{center}
\end{figure}
\begin{table}[h!]
$$
 \begin{tabular}{|c||c|c|c|}
  \hline
 $m$ & $\Delta x=\Delta t $&$Nodes$ & $Error$\\
 \hline
 \hline
  1	&  1 	& 200     & 0.0110235\\
   	&   0.5 	& 400           &0.0023161                \\
   	&  0.25	& 800    &    0.0004329           \\
\hline
 \end{tabular}
$$
\caption{Second analysis of the error for $m=1$}    
\end{table}

A third way of computing errors for $m\not=1$ is possible thanks to the Barenblatt formula introduced by J.L. V\'azquez in \cite{jlbar}. In that paper this numerical method is used to compute some Barenblatt profiles as next picture shows,
\begin{figure}[h!]
	\begin{center}
		\includegraphics[width=.8\textwidth]{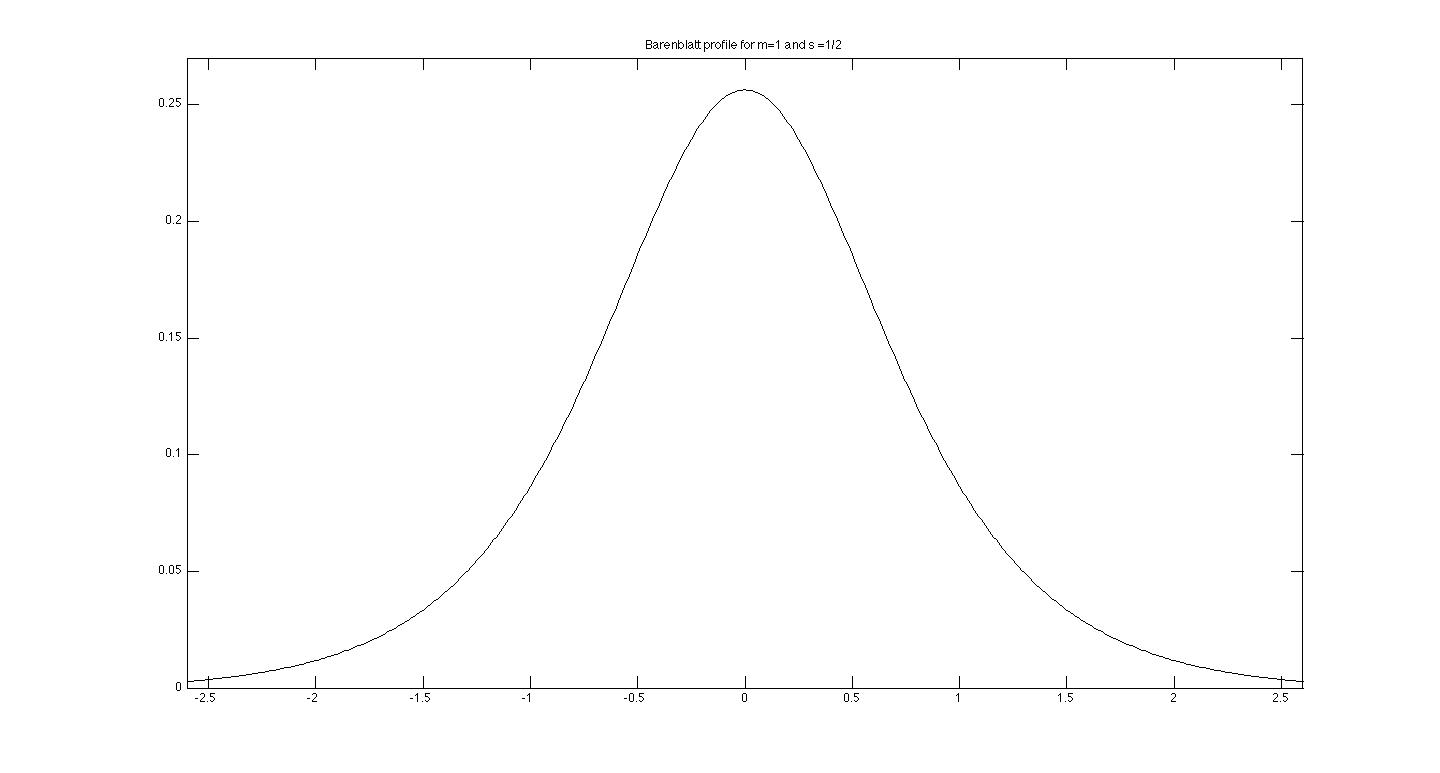}
        		\label{m1}
	\end{center}
\end{figure}
\begin{figure}[h!]
	\begin{center}
		\includegraphics[width=.8\textwidth]{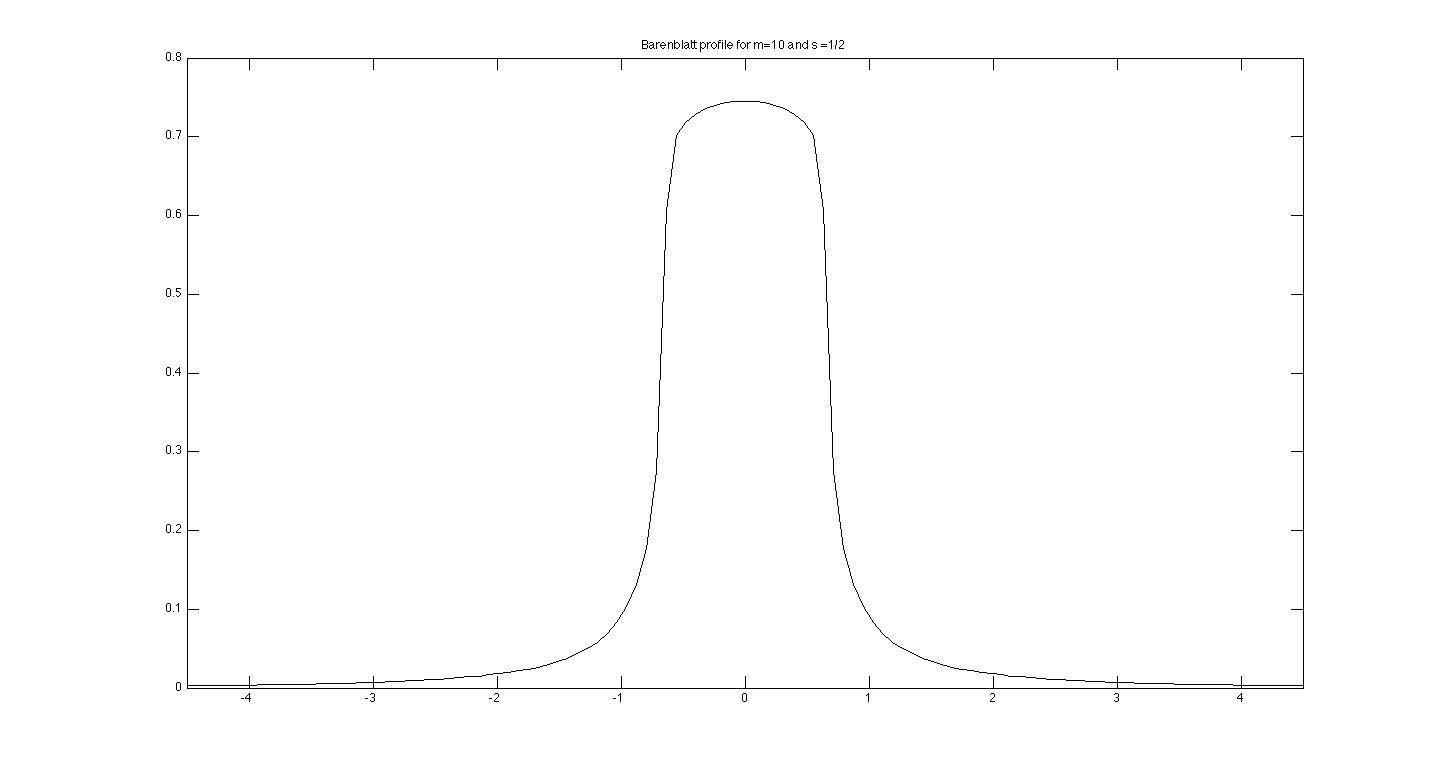}
        		\label{m1}
		\caption{Computed Barenblatt profiles for $m = 1$ and $m=10$ with $s = 1/2$}     
	\end{center}
\end{figure}

\newpage
\subsection{Graphics of some solutions}
We next present some graphics with the numerical results obtained with initial data
\[f(x)=C e^{-\frac{1}{(1-x)(1+x)}} \chi_{[-1,1]}(x).\]
In Figures 4 and 5 we show the numerical solutions for two small $m$ where the expected fat tail is observed.
\begin{figure}[h!]
	\begin{center}
		\includegraphics[width=.8\textwidth]{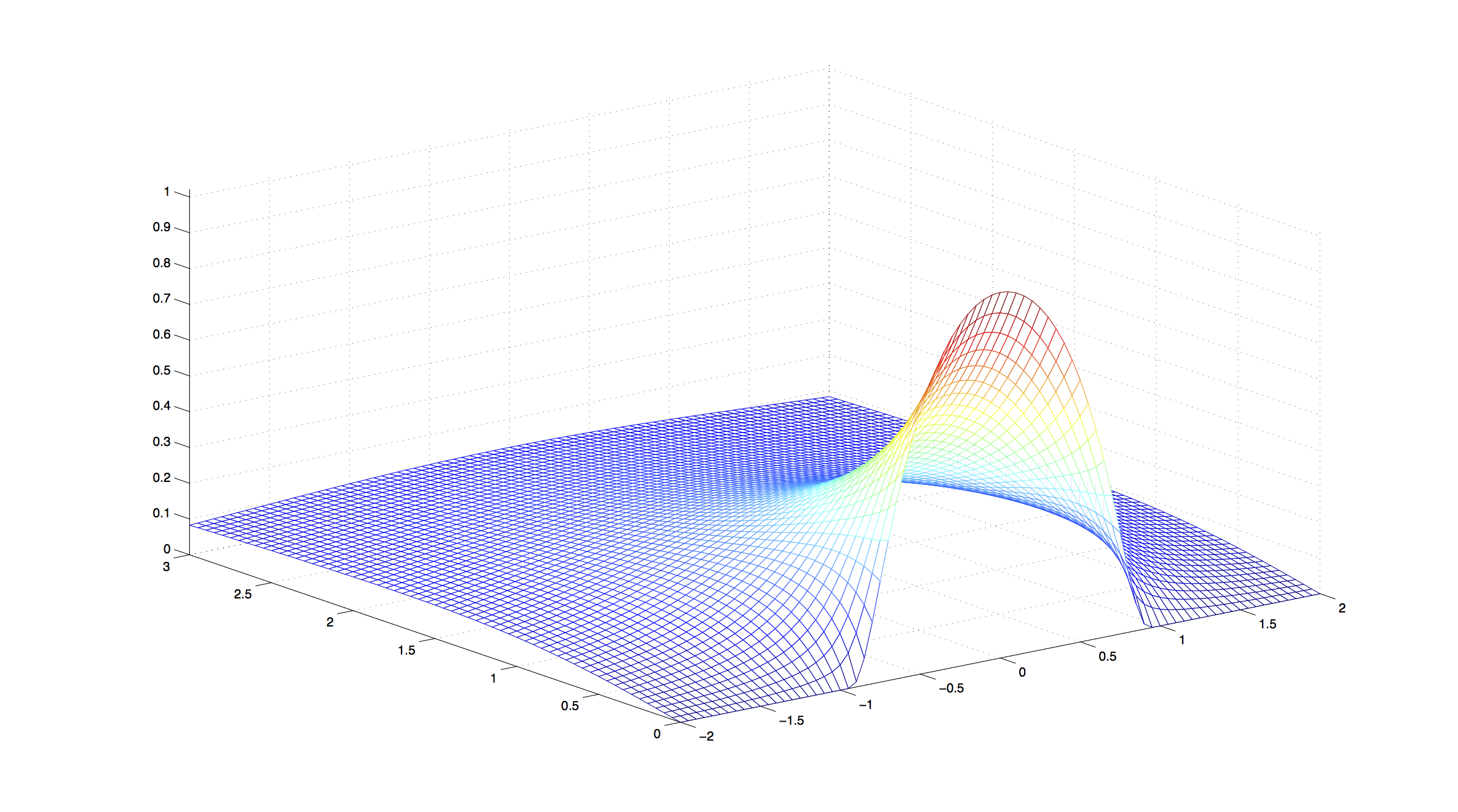}
		\caption{Numerical solution for $m=1$}     
        		\label{m1}
	\end{center}
\end{figure}

\begin{figure}[h!]
	\begin{center}
		\includegraphics[width=0.8\textwidth]{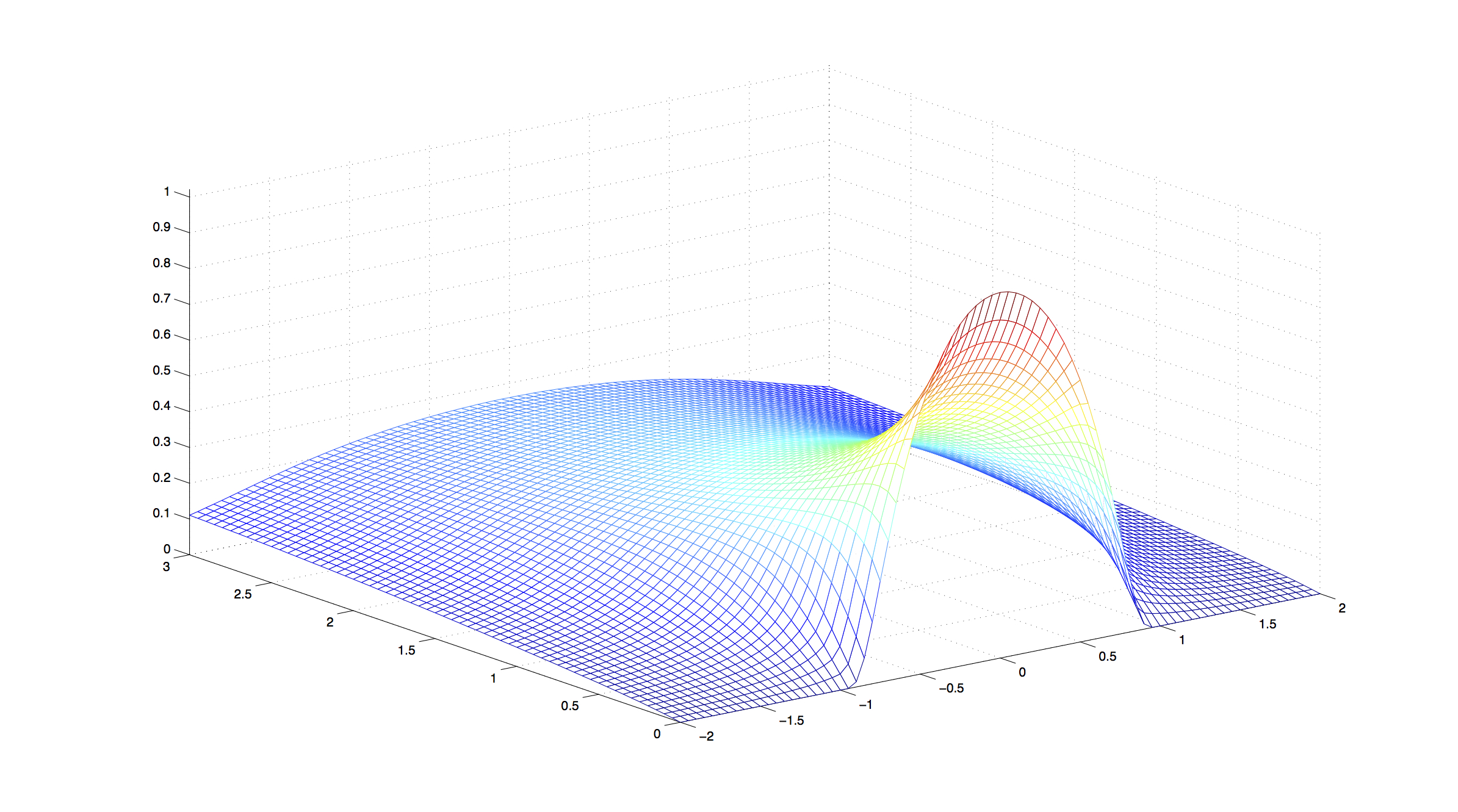}
		\caption{Numerical solution for $m=2$}     
        		\label{m2}
	\end{center}
\end{figure}
In the Figure 6 we observe the typical very slow diffusion of the porous medium equation with high values of $m$.
\begin{figure}[h!]\label{fig3}
	\begin{center}
		\includegraphics[width=0.95\textwidth]{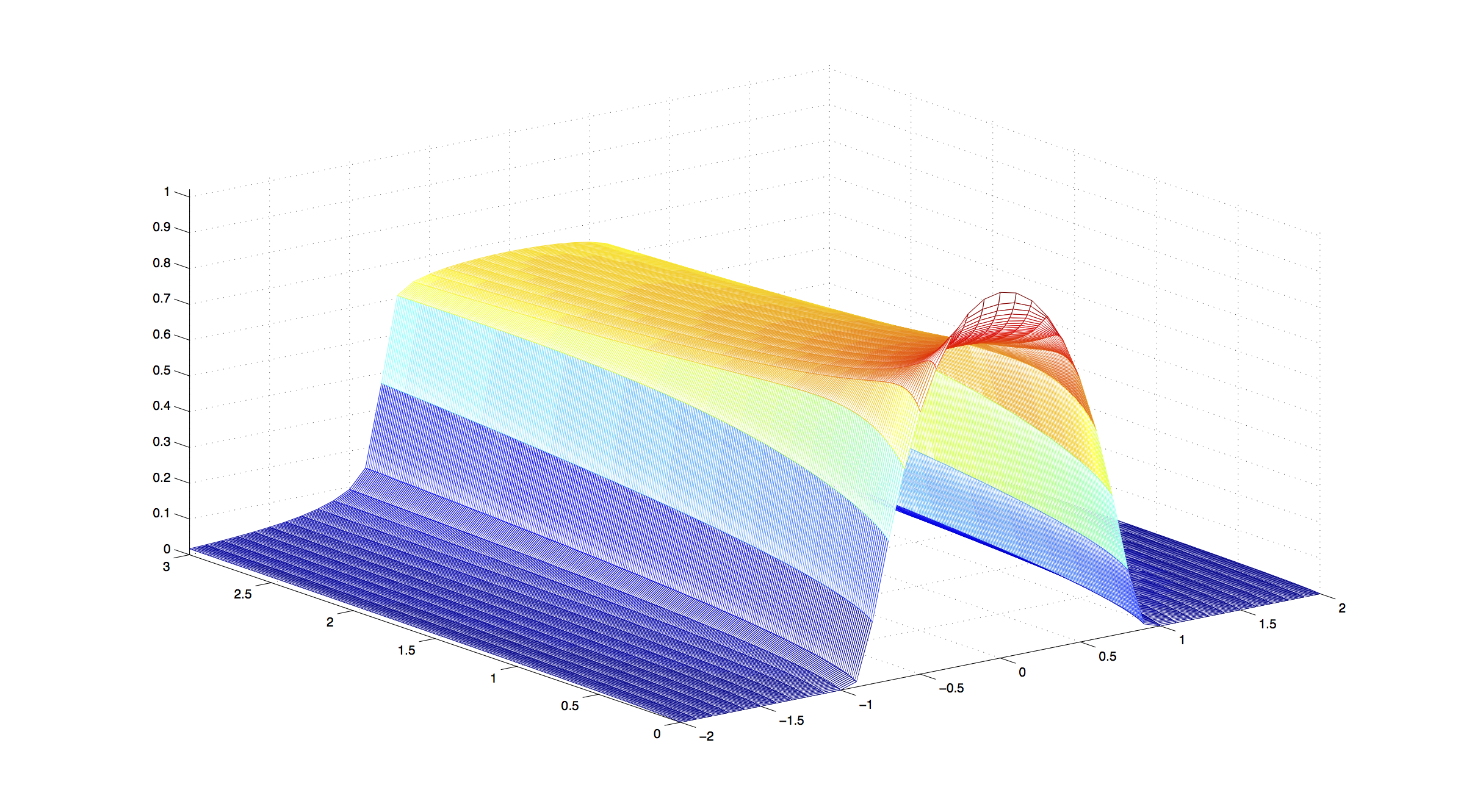}
		\caption{Numerical solution for $m=10$}     
        		\label{m10}
	\end{center}
\end{figure}

A case with different diffusion is presented in Figure 7 as an example where the numerical  method is used for more general $\varphi$.
\begin{figure}[h!]
	\begin{center}
		\includegraphics[width=0.85\textwidth]{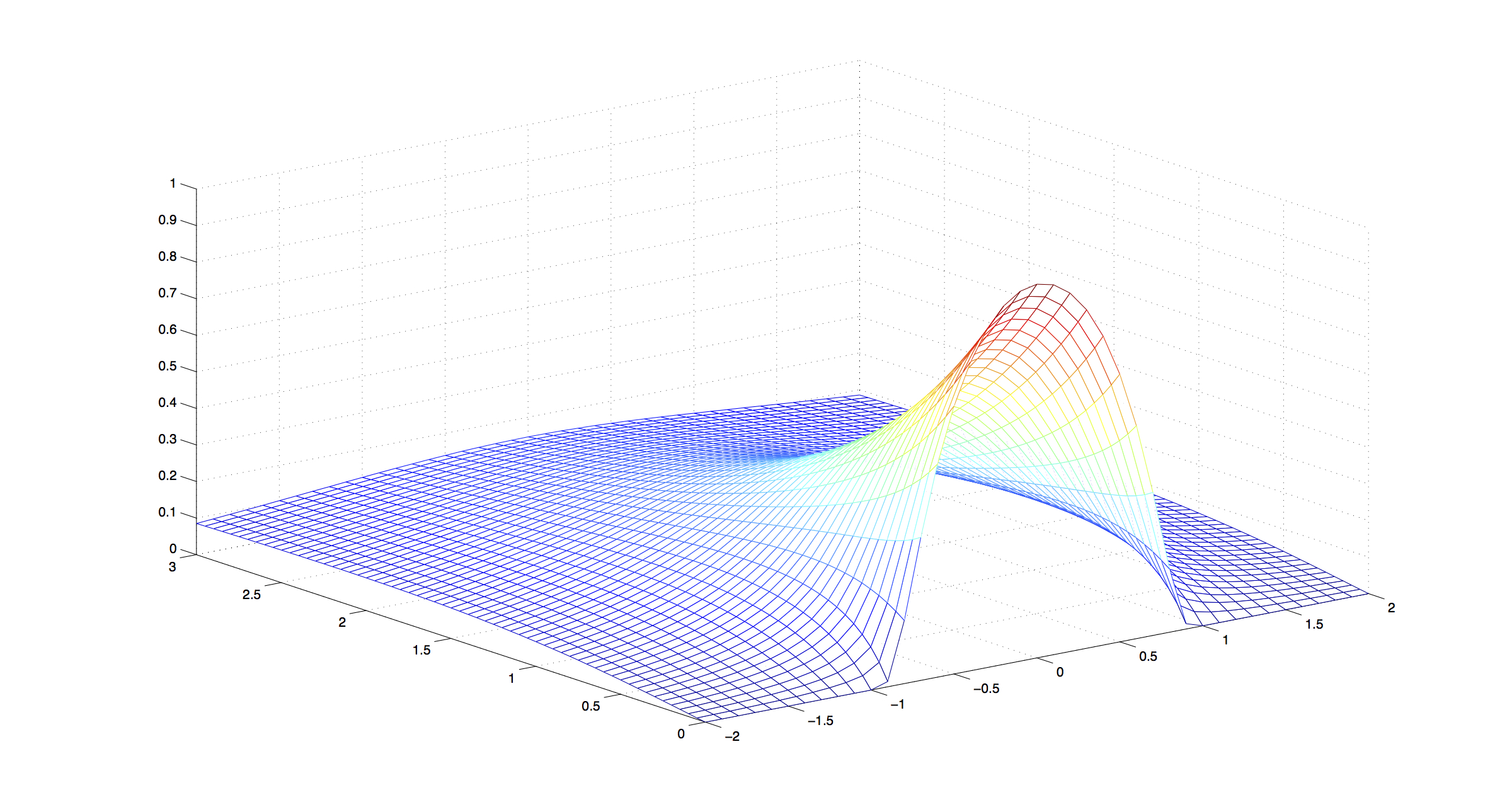}
		\caption{Numerical solution for $\varphi(u)=log(u+1)$}     
        		\label{m10}
	\end{center}
\end{figure}
\section*{Acknowledgments}
The author partially supported by the  Spanish Project MTM2011-24696 and by a FPU grant from Ministerio de Educaci\'on, Ciencia y Deporte, Spain.


\medskip



\begin{thebibliography}{99} 
\bibitem{caff}L. A. Caffarelli and L. Silvestre. \em An extension problem related to the fractional
laplacian\em, Comm. Partial Differential Equations 32 (2007), 12451260.
\bibitem{jakob}   S. Cifani, E. R. Jakobsen, and K. H. Karlsen.
\em The discontinuous Galerkin method for fractional degenerate convection-diffusion equations.\em
BIT 51(4), 809-844, 2011.
\bibitem{jakob2} S. Cifani and E. R. Jakobsen.
\em On the spectral vanishing viscosity method for periodic fractional conservation laws. \em To appear in Math. Comp.
\bibitem{jakob3}S. Cifani, and E. R. Jakobsen.
On numerical methods and error estimates for degenerate fractional convection-diffusion equations.
Submitted 2012.
\bibitem{landkof} N. S. Landkof. \em Foundations of modern potential theory\em. Springer-Verlag, New York,
1972. Translated from the Russian by A. P. Doohovskoy, Die Grundlehren der mathematischen
Wissenschaften, Band 180.
\bibitem{afracpor} A. De Pablo, F. Quir\'os, A. Rodr\'iguez, J. L V\'azquez.  \em A fractional porous medium equation.\em
Adv. Math. 226 (2011), no. 2, 1378Ð1409.
\bibitem{afracpor2}A. De Pablo, F. Quir\'os, A. Rodr\'iguez, J. L V\'azquez. \em A general fractional porous
medium equation\em. To appear in Comm. Pure Appl. Math., arXiv:1104.0306v1.
\bibitem{afracpor3}A. De Pablo, F. Quir\'os, A. Rodr\'iguez, J. L V\'azquez. \em Classical solutions for a logarithmic fractional diffusion equation\em. http://arxiv.org/pdf/1205.2223.pdf.
\bibitem{PQRV4}A. De Pablo, F. Quir\'os, A. Rodr\'iguez, J. L V\'azquez. In preparation.
\bibitem{yotov} C. Hall,  T. Porsching, \em Numerical Analysis of Partial Differential Equations\em  Prentice Hall, 1990. 
\bibitem{val}E. Valdinoci. \em From the long junp random walk to the fractional laplacian.\em Bol. Soc. Esp. Mat. Apl. S\'eMA No. 49 (2009), 33-44.
\bibitem{NOS13}R. H. Nochetto, E. Otarola, A. J. Salgado.
\em A PDE approach to fractional diffusion in general domains: a priori error analysis.\em
 http://arxiv.org/pdf/1302.0698.pdf
\bibitem{jlbar} J.L. V\'azquez. \em Barenblatt solutions and asymptotic behaviour for a nonlinear fractional heat equation of porous medium type\em. http://arxiv.org/pdf/1205.6332v2.pdf

\end{thebibliography}
\end{document}